\newtheorem{proposition}{Proposition}[section]
\newtheorem{lemma}[proposition]{Lemma}
\newtheorem{corollary}[proposition]{Corollary}
\newtheorem{theorem}[proposition]{Theorem}
\newtheorem{example}[proposition]{Example}
\theoremstyle{definition}
\newtheorem{definition}[proposition]{Definition}
\newtheorem{remark}[proposition]{Remark}
\newtheorem{assumption}[proposition]{Assumption}
\newcommand{\Ito}{It\^o }
\newcommand{\N}{\mathbb{N}}
\newcommand{\R}{\mathbb{R}}
\newcommand{\PP}{\mathbb{P}}
\renewcommand{\P}{\PP}
\newcommand{\F}{\mathcal{F}}
\newcommand{\E}{\mathbb{E}}
\newcommand{\e}[1]{\mathrm{e}^{#1}}
\newcommand{\loc}{\mathrm{loc}}
\newcommand{\dd}{\mathrm{d}}
\renewcommand{\d}{\dd}
\DeclareMathOperator{\law}{Law}
\DeclareMathOperator*{\esssup}{ess\,sup}
\DeclarePairedDelimiter{\norm}{\|}{\|}
\newcommand{\1}{\mathbbm{1}}
\newcommand{\vertiii}[1]{{\vert\kern-0.25ex\vert\kern-0.25ex\vert #1
		\vert\kern-0.25ex\vert\kern-0.25ex\vert}}
\title{Comparison Principles for Stochastic Volterra equations}
\author{Ole Ca\~nadas}
\address[Ole Ca\~nadas]{School of Mathematical Sciences, Dublin City University, Ireland}
\curraddr{}
\email{ole.canadas2@mail.dcu.ie}
\author{Martin Friesen}
\address[Martin Friesen]{School of Mathematical Sciences, Dublin City University, Ireland}
\curraddr{}
\email{martin.friesen@dcu.ie}
\date{\today}
\thanks{O.C. is funded by the Irish Research Council grant GOIPG/2023/3129 and was previously supported by the ERASMUS Mobility project during which part of this research was carried out.}
\begin{document}

\begin{abstract}
    In this work, we establish a comparison principle for stochastic Volterra equations with respect to the initial condition and the drift $b$ applicable to a wide class of Volterra kernels and input curves $g$. Such input curves are allowed to be singular in zero, and appear, e.g., in Markovian lifts for Volterra equations. For completely monotone kernels, our result holds without any further restrictions, while for regular kernels we give a characterisation of the comparison principle. Finally, we show that for not completely monotone kernels such a principle fails unless the drift is monotone. As a side-product of our results, we also complement the literature on the weak existence of continuous nonnegative solutions, which covers the rough Cox-Ingersoll-Ross process with singular initial conditions.
\end{abstract}

\maketitle

\noindent
\textbf{Keywords:} stochastic Volterra process; comparison principle; rough Cox-Ingersoll-Ross process; splitting method\\



\section{Introduction}

Let $B$ be a standard one-dimensional Brownian motion, $0\neq K\in L^2_\loc(\R_+)$ a Volterra convolution kernel, $b,\sigma\colon\R_+\times\R\longrightarrow \R$ continuous with linear growth in the spatial variable, and $g \in L_{\loc}^2(\R_+)$. In this work, we study stochastic Volterra equations (SVEs) of the form
\begin{equation}\label{eq:sve}
     X_t= g(t)+\int_0^t K(t-s)b(s,X_s)\,\d s + \int_0^t K(t-s)\sigma(s,X_s)\,\d B_s.
\end{equation}
In many applications, one assumes that the Volterra kernel is given by the fractional Riemann-Liouville kernel $K(t) = \frac{t^{\alpha-1}}{\Gamma(\alpha)}$ with $\alpha \in (1/2,2)$. Such equations have gained increased attention in mathematical Finance for the modelling of rough volatility, see \cite{pricing_under_rough_volatility,affine_fractional_stochastic_volatility_models,el_euch1,el_euch2, short_time_fukasawa}.

The weak existence of a solution was established in \cite{affine_volterra} whenever $b,\sigma$ are continuous with linear growth, $g$ is constant and $K$ satisfies an additional regularity condition given below. An extension to processes with jumps and more general $g$ is discussed in \cite{weak_solution}. When $b$ and $\sigma$ are Lipschitz continuous, the existence of a unique strong solution was shown in \cite[Theorem A.1]{markovian_structure}, see also \cite{berger_mizel, protter_volterra}. For non-Lipschitz coefficients, the pathwise uniqueness of solutions is more subtle and yet not fully understood. For the fractional Riemann-Liouville kernel $K(t) = \frac{t^{\alpha-1}}{\Gamma(\alpha)}$ with $\alpha \in (1/2,1)$, the method proposed in \cite{mytnik} gives the existence and uniqueness of strong solutions whenever $\sigma$ is $\eta$-H\"older continuous and $\alpha \eta > \frac{1}{2}$. An extension of this method towards general drifts and non-convolution equations was recently discussed in \cite{proemel_dec22}. Most recently, in \cite{hamaguchi2023weak}, the author proves the weak existence and uniqueness in law for \eqref{eq:sve} provided that $b,\sigma$ are uniformly continuous, $\sigma$ is non-degenerate, and some technical condition on the modulus of continuity holds. In contrast to the existing literature, see also \cite{proemel_scheffels_weak, proemel_dec22}, in this work we focus on continuous solutions where $g$ is not regular but belongs to $g \in L_{\loc}^{p}(\R_+) \cap C((0,\infty))$ with an additional growth condition at the origin. Such functions $g$ that are singular in $t = 0$ appear, e.g., from Markovian lifts where the initial condition does not belong to the domain of the corresponding projection operator, see e.g. \cite[Theorem 3.3]{hamaguchi2023weak}. Thus, our Theorem \ref{weakexistence_cm_lg} naturally complements the results obtained therein.

In the main part of this work, we study \emph{comparison principles} for solutions of \eqref{eq:sve}. More precisely, given initial data $(g_i,K,b_i,\sigma)$ with $i=1,2$, we provide a characterisation and sufficient conditions when the corresponding solutions $X^1,X^2$ satisfy
\[
\P[X_t^1 \leq X_t^2, \ t \geq 0] = 1.
\]
While such comparison principles are well-known for classical ordinary differential equations (see \cite{ode_walter}) and for Markovian stochastic equations (see \cite{ikeda_watanabe} for the continuous case and \cite{MR4195178} for the case of continuous-state branching processes), much less is known for their Volterra counterparts. For instance, comparison principles for deterministic Volterra equations have been studied in \cite[Chapter 12.1]{grippenberg}, while the first results on stochastic Volterra equations were obtained in \cite{sundar_comparison, tudor_comparison} for general drifts and restrictive assumptions on the diffusion which exclude our convolution setting \eqref{eq:sve}.

One key obstacle when dealing with stochastic Volterra equations stems from the absence of the semimartingale property. Indeed, while the classical methods described in \cite[Chapter VI]{ikeda_watanabe} are deeply based on the \Ito formula, in our setting an analogous approach may only be applied when $K$ is sufficiently regular in the sense that $K \in W^{1,2}_{\loc}(\R_+)$. Based on the splitting method, Alfonsi studied in \cite{alfonsi2023} a numerical approximation of stochastic Volterra equations applicable to $g(t) = x$ constant, globally Lipschitz continuous coefficients $b, \sigma$, and $K$ being either \textit{non-negativity preserving} with $K(0) > 0$ or completely monotone. The latter work includes, to our knowledge, the first general comparison principle for equations of the form \eqref{eq:sve}. This splitting method has also been recently applied in \cite{AS24} to prove the existence and uniqueness of nonnegative solutions of stochastic Volterra equations with jumps for non-Lipschitz continuous coefficients and regular Volterra kernels $K \in C^2(\R_+)$. The reader interested in other numerical schemes for stochastic Volterra processes may consult \cite{AbiJaber_ElEuch, alfonsi2022} and the references therein.

In this work, we first extend Alfonsi's numerical approximation \cite{alfonsi2023} to non-constant $g$ with a particular focus on possibly singular functions, and time-dependent coefficients. Based on this approximation, we obtain a comparison principle that covers kernels $K$ that are either \textit{non-negativity preserving} with $K(0) > 0$ or completely monotone. By approximation, we also prove the existence of a \textit{monotone coupling}. Going beyond the framework discussed in \cite{alfonsi2023}, we also study kernels $K$ that are not completely monotone, but sufficently regular. Firstly, beyond complete monotonicity, we show that a comparison principle may fails since solutions of \eqref{eq:sve} are typically not Markov processes and hence past trajectory entering the equation via $b,\sigma$ may violate the ordering of solutions. Using a support theorem for stochastic Volterra equations \cite{K21}, we provide a characterisation of the comparison principle, and discuss some necessary conditions. For the case of additive noise, we show that monotonicity of the drift is sufficient for the comparison property.

This work is organised as follows. In Section \ref{section: Non-negativity preserving case} we treat the case of completely monotone kernels, where we first prove a comparison principle for Lipschitz continuous $b,\sigma$. Then we prove the existence of continuous weak solutions for continuous $b,\sigma$ with at most linear growth, and finally derive the corresponding comparison principle in this case. As an application, we conclude Section \ref{section: Non-negativity preserving case} with a result on the weak existence of continuous nonnegative solutions of \eqref{eq:sve}. Afterwards, in section \ref{section: Regular Case}, we study similar results for the case of not necessarily completely monotone kernels that are sufficiently regular. Finally, some technical results on the existence, uniqueness, and sample path regularity of solutions to stochastic Volterra equations are collected in Appendix \ref{appendix: Existence and Uniqueness}, while the convergence of the splitting method is studied in Appendix \ref{appendix_splitting}.

\section{Non-negativity preserving case}\label{section: Non-negativity preserving case}

\subsection{Lipschitz coefficients}

In this section, we study the case where the Volterra kernel preserves nonnegativity. As a first step, let us introduce the minimal conditions on the input data $(g, K, b, \sigma)$ that will be used for the construction of continuous weak solutions.
\begin{assumption}\label{assumption}
		There exist $C>0$, $\xi \in [0,1]$ such that
        \[
            |b(t,x)|+|\sigma(t,x)|\leq C(1+|x|)^\xi
        \]
        for a.a. $t\geq0$ and $x\in\R$. Moreover, one of the following two cases holds:
        \begin{enumerate}
            \item $g\in L^\infty_\loc(\R_+)$ or $\xi = 0$ with $g \in L_{\loc}^{2}(\R_+)$. Moreover, $K\in L^2_\loc(\R_+)$ and there exist $\gamma \in (0,1/2]$ and for each $T>0$ a constant $c(T)>0$ such that
            \[
            \norm{K}_{L^2([0,h])} + \norm{K(\cdot+h)-K(\cdot)}_{L^2([0,T])} \leq c(T)h^\gamma,\quad h\in[0,1].
            \]
            \item $g\in L^{q}_\loc(\R_+)$ for some $q\in(2,\infty)$, $\xi \neq0$, there exists $\eta>0$ such that $K\in L^{2+\eta}_\loc(\R_+)$ and there exist $\gamma \in (0,1/2]$ and for each $T>0$ a constant $c(T,\eta)$ such that for $h \in [0,1]$
            \begin{equation*}
                \norm{K}_{L^{2+\eta}([0,h])} + \norm{K(\cdot+h)-K(\cdot)}_{L^{2+\eta}([0,T])} \leq c(T,\eta)h^\gamma.
            \end{equation*}
            Moreover, the value $q$ satisfies
            \begin{align}\label{eq: prime condition}
                q> 2\xi \frac{1 + \frac{1}{\eta}}{\gamma + \frac{1}{2}\frac{\eta}{2+\eta}}.
            \end{align}
        \end{enumerate}
\end{assumption}
The above assumption allows us to assume without loss of generality that for any solution $X$ of \eqref{eq:sve}, $X-g$ is continuous, c.f. Proposition \ref{prop_loc_hoelder}. Note that such a condition is satisfied by the fractional Riemann-Liouville kernel as demonstrated in the next example.

\begin{example}
    Let $K(t) = \frac{t^{\alpha-1}}{\Gamma(\alpha)}$ with $\alpha \in (1/2,1)$. Then $2+\eta < \frac{1}{1-\alpha}$, $\gamma = \alpha - 1 + \frac{1}{2+\eta}$. Hence \eqref{eq: prime condition} takes the particular form $q > 2\xi\frac{1+1/\eta}{\alpha - 1/2}$. By letting $\eta \nearrow \frac{2\alpha-1}{1-\alpha}$, we see that this condition is satisfied whenever
    \[
        q > \frac{\xi \alpha}{\left(\alpha - \frac{1}{2}\right)^2}.
    \]
\end{example}

In the same spirit, one can verify that all kernels covered by \cite[Example 2.3]{affine_volterra} satisfy the above condition. Next, to prove the desired comparison principles, let us introduce some concepts that allow us to compare the solutions. In this section, we focus on Volterra kernels that preserve nonnegativity in the sense of the following definition.

\begin{definition}\label{def_pos_preserving}
    A Volterra kernel $K\colon(0,\infty) \longrightarrow  \R_+$ preserves non-negativity if it satisfies $K(0_+)\coloneqq\lim_{t\downarrow0}K(t) \in(0,\infty)$ and for each $N \in \N$, and all $x_1,\dots,x_N\in\R$ and $0\leq t_1<\dots<t_N$ it holds that
	     \begin{equation}\label{eq:pos_preserving}
	           \sum_{\ell=1}^k x_{\ell}K(t_k-t_\ell)\geq0, \ \  \forall k\in\{1,\dots,N\} \Longrightarrow
                \sum_{\ell=1}^N \1_{\{ t_\ell\leq t\}}x_\ell K(t-t_\ell)\geq0,\quad \forall t\geq0.
	     \end{equation}
Here we use the convention $\sum_\varnothing \coloneqq0$. Moreover, We define the set $\mathcal{K}$ of non-increasing non-negativity preserving kernels and $\widehat{\mathcal{K}}\coloneqq\mathcal{K}\cup\mathcal{CM}$, where $\mathcal{CM}$ denotes the set of completely monotone kernels.
\end{definition}

 This condition was recently introduced and studied in \cite{alfonsi2023}, and allows us to show that the Volterra kernel preserves the order structure subject to the discretisation of the process via the splitting method. Additionally, we remark that every regular completely monotone kernel preserves non-negativity, see \cite[Theorem 2.11]{alfonsi2023}. Finally, to obtain the comparison principle for two different input data $(g_1,K,b_1, \sigma)$ and $(g_2, K, b_2, \sigma)$, we use the following definition.

\begin{definition}\label{def: comparable}
        Let $(g_i,K,b_i,\sigma)$ with $i=1,2$ satisfy Assumption \ref{assumption}, and suppose that $K\colon (0,\infty) \longrightarrow \R_+$ is continuous and $K\in\widehat{\mathcal{K}}$. We say that $(g_1,K,b_1,\sigma)$ and $(g_2,K,b_2,\sigma)$ are comparable, if the following conditions hold:
    \begin{enumerate}
        \item[(i)] There exist $h_1,h_2\in L^2_\loc(\R_+)$ and $\widetilde{g_1},\widetilde{g}_2\in C((0,\infty))$ such that
        \begin{align}\label{eq: g representation}
            g_i(t) = \widetilde{g}_i(t) + \int_0^t K(t-s)h_i(s)\, \d s, \qquad i = 1,2,
        \end{align}
        and $\widetilde{g}_2 - \widetilde{g}_1 \geq 0$ is non-decreasing on $(0,\infty)$.
        \item[(ii)] $b_1(t,x) + h_1(t)\leq b_2(t,x) + h_2(t)$ holds for each $t \geq 0$ and $x \in \R$.
    \end{enumerate}
\end{definition}

This class of comparable input data is sufficiently rich to cover the most frequently used examples from applications, as illustrated in Example \ref{example_frac_OU} and the discussion below. The following is our first result for non-negativity preserving kernels.

\begin{theorem} \label{comparison_cm_Lipschitz}
    Suppose that $(g_i,K,b_i,\sigma)$ with $i=1,2$ are comparable in the sense of Definition \ref{def: comparable}. Assume that there exists $\delta > 0$ with
    \begin{align}\label{eq: g h integrability}
     \int_0^T  \overline{g}_i(t)^{2+\delta} (1 + h_i(t)^2)\, \d t &< \infty, \qquad i = 1,2,
    \end{align}
    where $\overline{g}_i(t) = \sup_{s \in [t, T]}|g_i(s)|$, and that there exists $C > 0$ such that for a.a. $t\geq0$ and $x,y\in\R$,
    \begin{equation}\label{eq:lip}
    |b_1(t,x)-b_1(t,y)| + |b_2(t,x)-b_2(t,y)| + |\sigma(t,x)-\sigma(t,y)| \leq C|x-y|.
    \end{equation}
    Let $X^1, X^2$ be the unique strong solutions of \eqref{eq:sve} with input data $(g_i, K, b_i, \sigma)$, $i = 1,2$. Then $\P[X^1_t\leq X^2_t,\, t > 0] = 1$.
\end{theorem}
\begin{proof}
    First note that equation \eqref{eq:sve} has unique strong solutions $X^1, X^2$ with sample paths in $L_{\loc}^2(\R_+)$ due to Proposition \ref{prop: existence}. Moreover, given Assumption \ref{assumption}, it follows from Proposition \ref{prop_loc_hoelder} that $X^1 - g_1$, $X^2 - g_2$ have a modification with continuous sample paths. Hence $X^1,X^2$ are continuous on $(0,\infty)$. Finally, define
    \[
        \widetilde{b}_i(t,x) = b_i(t,x) + h_i(t),\quad t\in[0,T].
    \]
    Hence $\widetilde{b}_1 \leq \widetilde{b}_2$ and \eqref{eq:lip} still holds with the same $C$, while the linear growth constant becomes $|\widetilde{b}_1(t,x)| + |\widetilde{b}_2(t,x)| \leq \widetilde{C}(t)(1+|x|)$ with $\widetilde{C}(t) = C + |h_1(t)|+|h_2(t)|$, so that $\widetilde{C} \in L_{\loc}^2(\R_+)$. The proof is divided into three steps.

    \emph{Step 1:} We introduce a similar splitting method to Alfonsi \cite{alfonsi2023}, but now applied for non-constant possibly singular at zero $g$ and time-dependent coefficients. Suppose first that $K$ is nonnegative, nonincreasing and continuous on $\R_+$ with $K(0_+)>0$. Let $T>0$, $N\in\N$ be fixed and $t_k= kT/N,$ $k\in\{0,1,\dots,N\},$ be an equidistant grid of $[0,T]$ with step size $T/N.$ Recursively, we define the process $(\widehat{X}^i_t)_{t\in (0,T]}$ by setting
    \begin{align}
    \widehat{X}^i_t &= \widetilde{g}_i(t) + \sum_{k=1}^N \1_{[t_k,T]}(t)\left(K(t-t_k) \int_{t_{k-1}}^{t_k}[ \widetilde{b}_i(s,\xi^{i,k}_s)\,\d s + \sigma(s,\xi^{i,k}_s)\,\d B_s]\right)\nonumber
    \end{align}
    where $(\xi^{i,k}_t)_{t\in[t_{k-1},t_k)}$, $k\in\{1,\dots,N\},$ denotes the strong solution of the auxiliary SDE
    \begin{equation*}\label{eq:aux_sde}
    \xi^{i,k}_t = \widehat{X}_{t_k-}^i + \int_{t_{k-1}}^t K(0_+)[ \widetilde{b}_i(s,\xi^{i,k}_s)\,\d s + \sigma(s,\xi^{i,k}_s)\,\d B_s].
    \end{equation*}
    Under the given conditions, it follows from Proposition \ref{prop_approx_sing_g} that
    \begin{align}\label{eq: approximation X hat}
        \E[|\widehat{X}^i_t-X^i_t|^2]\longrightarrow 0, \qquad N\longrightarrow \infty.
    \end{align}
    Thus it suffices to prove that $\widehat{X}_t^1 \leq \widehat{X}_t^2$ a.s. for $t\in(0,T]$.

    \emph{Step 2:} We consider the case where $K$ is continuous and $K\in \mathcal{K}$. Let $T>0$ be fixed. For $N\in\N$, we define the corresponding approximations $\widehat{X}^1,\widehat{X}^2$ and denote by $\xi^1,\xi^2$ the auxiliary processes that arise in their construction. Note that we have by continuity of $K$ and $g$
    \begin{align*}
        \xi^{i,k}_{t_k-} &=  \widetilde{g}(t_k) + \sum_{\ell=1}^{k-1} K(t_k-t_\ell)\int_{t_{\ell-1}}^{t_\ell}[ \widetilde{b}_i(s,\xi^{i,\ell}_s)\,\d s + \sigma(s,\xi^{i,\ell}_s)\,\d B_s]\\ &\quad+ K(t_k-t_k)\int_{t_{k-1}}^{t_k} [ \widetilde{b}_i(s,\xi^{i,k}_s)\,\d s + \sigma(s,\xi^{i,k}_s)\,\d B_s] = \widehat{X}^i_{t_k}
    \end{align*}
    and similarly
    \begin{align*}
        &\widehat{X}^i_{t_k}-\widehat{X}^i_{t_k-}\\
        &\quad= \widetilde{g}_i(t_k) + \sum_{\ell=1}^k K(t_k-t_\ell) \int_{t_{\ell-1}}^{t_\ell}[ \widetilde{b}_i(s,\xi^{i,\ell}_s)\,\d s+ \sigma(s,\xi^{i,\ell}_s)\,\d B_s]
        \\ &\qquad - \widetilde{g}_i(t_k) -\sum_{\ell=1}^{k-1} K(t_k-t_\ell) \int_{t_{\ell-1}}^{t_\ell}[ \widetilde{b}_i(s,\xi^{i,\ell}_s)\,\d s + \sigma(s,\xi^{i,\ell}_s)\,\d B_s]\\
        &\quad= K(0_+)\int_{t_{k-1}}^{t_k}[ \widetilde{b}_i(s,\xi^{i,k}_s)\,\d s+ \sigma(s,\xi^{i,k}_s)\,\d B_s].
    \end{align*}
    Hence, we can represent them in compact form as
    \[
    \widehat{X}^i_t = \widetilde{g}_i(t) + \sum_{\ell=1}^N \1_{[t_\ell,T]}(t)\frac{\widehat{X}^i_{t_\ell}-\widehat{X}^i_{t_\ell-}}{K(0_+)}K(t-t_\ell),\qquad t\in(0,T].
    \]
    We show by induction on $k$ that $\widehat{X}^1_t\leq \widehat{X}^2_t$ on $(0,t_k]$.
	For $k=1,$ we have per assumption $\widehat{X}^1_t = \widetilde{g}_1(t) \leq \widetilde{g}_2(t) = \widehat{X}^2_t$ for $t\in (0,t_1)$. Using the Comparison Principle for SDEs, see \cite[Chapter  VI, Theorem 1.1]{ikeda_watanabe}, we obtain $\widehat{X}^1_{t_1} = \xi_{t_1-}^{1,1} \leq \xi_{t_1-}^{2,1} = \widehat{X}_{t_1}^2$. Suppose that the result is true for some $k\geq1$. Then, we have for all $j\in\{1,\dots,k\}$, by construction
	\begin{equation*}
			0 \leq \widehat{X}_{t_j}^2-\widehat{X}_{t_j}^1 = (\widetilde{g}_2-\widetilde{g}_1)(t_j)+ \sum_{\ell=1}^j \frac{(\widehat{X}_{t_\ell}^2-\widehat{X}_{t_\ell-}^2)-(\widehat{X}_{t_\ell}^1-\widehat{X}_{t_\ell-}^1)}{K(0_+)}K(t_j-t_\ell).
	\end{equation*}
	Since $K$ preserves non-negativity and $\widetilde{g}_2-\widetilde{g}_1\geq0$ is non-decreasing, we obtain by Lemma \ref{prop_non-neg_preserv} that $\widehat{X}_t^2-\widehat{X}_t^1\geq0$ holds for $t\in[t_k,t_{k+1})$. Using again the comparison result for SDEs yields
	\[
		\widehat{X}_{t_{k+1}}^1 = \xi^{1,k+1}_{t_{k+1}-}\leq \xi^{2,k+1}_{t_{k+1}-} = \widehat{X}^2_{t_{k+1}}.
	\]
	By induction, we get for $k=N-1$ that $\P[\widehat{X}^1_T\leq\widehat{X}^2_T]=1$. By \eqref{eq: approximation X hat} we conclude $\P[X^1_T\leq X^2_T]=1$. Since $T > 0$ was arbitrary and $X$ has continuous sample paths on $(0,\infty)$, we obtain the desired comparison principle.

    \emph{Step 3:} Let us now prove the assertion when $K\in\mathcal{CM}$ is completely montone. Using the Bernstein theorem, see \cite[Theorem 4.8]{SchillingSongVondracek+2012}, we find a Borel measure $\mu$ on $\R_+$ such that
	$K(t) = \int_{\R_+}\e{-\rho t}\,\mu(\d\rho)$ for $t\in(0,\infty)$. Define, for $H>0$, the approximation $K^H\colon\R_+\longrightarrow \R_+$ by $K^H(t) \coloneqq \int_{[0,H]}\e{-\rho t}\,\mu(\d\rho)$. Then $0 \leq K^H(t) \leq K^H(0)$ and $|(K^H)'(t)| \leq HK^H(0)$.  Hence, it satisfies Assumption \ref{assumption} with $\gamma=1/(2+\eta)$ and therefore
	\begin{equation*}\label{eq:volterra_kernel_kh}
			X^{i,H}_t = g_i(t) + \int_0^t K^H(t-s)b_i(s,X^{i,H}_s)\,\d s + \int_0^t K^H(t-s) \sigma(s,X^{i,H}_s)\,\d B_s
	\end{equation*}
    admits a unique, strong continuous solution for $i = 1,2$. Since regular completely monotone kernels are non-negativity preserving and satisfy \eqref{eq:pos_preserving}, see \cite[Theorem 2.11]{alfonsi2023}, we may apply steps 1 and 2 to obtain $\P[X^{1,H}_T\leq X^{2,H}_T]=1$ for each $T > 0$. Next, we prove that
    \[
        \lim_{H\longrightarrow \infty}\E[|X^{i}_T-X^{i,H}_T|^2]=0
    \]
    which completes the proof. Applying Lemma \ref{lemma kernel approx} yields
    \begin{equation*}
        \E[|X_t - X^{i,H}_t|^2] \leq C\left( \int_0^T |K(s)-K^H(s)|^{2+\eta}\,\d s \right)^{\frac{2}{2 + \eta}}
    \end{equation*}
    where $C>0$ is some constant. Finally, the right-hand side can be bounded by
    \begin{align*}
        \int_0^T |K(s)-K^H(s)|^{2 + \eta}\,\d s
        &\leq \int_0^T \left( \int_{[H,\infty)}\e{-\rho s}\,\mu(\d\rho) \right)^{2+\eta}\, \d s
        \\ &\leq \int_0^T \e{-sH(2+\eta)/2} K(s/2)^{2 + \eta}\, \d s.
    \end{align*}
    By dominated convergence, the latter tends to zero as $H \longrightarrow  \infty$, which proves the assertion.
\end{proof}

Note that the case where $g_i(t) = x_i$ for $i = 1,2$ was already covered in \cite{alfonsi2023}. Since the fractional Riemann-Liouville kernel satisfies Assumption \ref{assumption} and is completely monotone, Theorem \ref{comparison_cm_Lipschitz} can be applied to this case.

\begin{example}\label{example_frac_OU}
 Let $X^1, X^2$ be fractional Ornstein-Uhlenbeck processes given as the unique strong solutions of
 \[
  X^i_t = x_i \frac{t^{\gamma - 1}}{\Gamma(\gamma)} + \int_0^t \frac{(t-s)^{\alpha-1}}{\Gamma(\alpha)}(b_i+\beta X^i_s)\,\d s + \sigma\int_0^t \frac{(t-s)^{\alpha-1}}{\Gamma(\alpha)}\,\d B_s,
 \]
 where $b_1 \leq b_2$, $\beta,\sigma\in\R$, $\alpha\in(1/2,1)$, $\gamma > 0$, and $x_1 \leq x_2$. If $\beta \geq 0$, then the comparison principle holds. If $\beta < 0$, then the comparison principle holds if and only if $\gamma \geq \alpha$.
\end{example}
\begin{proof}
 It follows from \cite{affine_volterra} that $X^i$ is explicitly given by
 \[
  X_t^i = x_i t^{\gamma - 1}E_{\alpha,\gamma}(\beta t^{\alpha}) + \int_0^t s^{\alpha - 1}E_{\alpha,\alpha}(\beta s^{\alpha})b_i \,\d s + \int_0^t (t-s)^{\alpha - 1}E_{\alpha,\alpha}(\beta(t-s)^{\alpha})\,\d B_s
 \]
 where $E_{\alpha,\gamma}(z) = \sum_{n=0}^{\infty}\frac{z^n}{\Gamma(\alpha n + \gamma)}$ denotes the two-parameter Mittag-Leffler function. Hence
 \[
  X_t^2 - X_t^1 = (x_2-x_1) t^{\gamma - 1}E_{\alpha,\gamma}(\beta t^{\alpha}) + \int_0^t s^{\alpha - 1}E_{\alpha,\alpha}(\beta s^{\alpha})(b_2 - b_1) \,\d s.
 \]
 Since $E_{\alpha,\alpha}(\beta t^{\alpha}) \geq 0$ for any $\beta,t$, the comparison principle is satisfied if and only if $E_{\alpha,\gamma}(\beta t^{\alpha}) \geq 0$ for all $t \geq 0$. If $\beta \geq 0$, then $E_{\alpha,\gamma}(\beta t^{\alpha}) \geq 0$ and nothing needs to be shown. Suppose that $\beta < 0$. If $\gamma \geq \alpha$, then $t\longmapsto E_{\alpha,\gamma}(-t^{\alpha})$ is completely monotone, see \cite[Section 4.9.2]{mainardi_book}, and hence the comparison principle holds. Conversely, suppose that $\gamma < \alpha$. Then $(0,\infty)\ni t\longmapsto t^{\gamma-1} E_{\alpha,\gamma}(-|\beta|t^\alpha)$ is integrable and its Laplace transform is given by $\int_0^\infty t^{\gamma-1} E_{\alpha,\gamma}(-|\beta|t^\alpha)\e{-st}\,\d s = \frac{s^{\alpha-\gamma}}{s^\alpha + |\beta|}$ for $s \geq 0$. Evaluating this in $s = 0$ gives
 \[
    \int_0^{\infty} t^{\gamma - 1}E_{\alpha,\gamma}(-|\beta|t^{\alpha})\,\d s = 0
 \]
 and hence $(0,\infty)\ni t\longmapsto E_{\alpha,\gamma}(-|\beta|t^\alpha)$ needs to attain negative values. Consequently, the comparison principle fails to hold.
\end{proof}

While the above proof is direct, one may also seek to apply Theorem \ref{comparison_cm_Lipschitz} instead. Indeed, consider the fractional Riemann-Liouville kernel $K(t) = \frac{t^{\alpha-1}}{\Gamma(\alpha)}$ with $\alpha \in (1/2,1)$ and define $g_i(t) = x_i \frac{t^{\gamma-1}}{\Gamma(\gamma)}$ where we additionally assume that $\gamma \geq \alpha$. Then $\widetilde{g}_i(t) = 0$ and $h_i(t) = x_i\frac{t^{(\gamma-\alpha)-1}}{\Gamma(\gamma-\alpha)}$. In particular, condition (ii) from Definition \ref{def: comparable} reduces to
\[
        (b_2 - b_1) + (x_2 - x_1)\left( \beta + \frac{t^{(\gamma-\alpha)-1}}{\Gamma(\gamma-\alpha)} \right) \geq 0.
\]
Hence, Theorem \ref{comparison_cm_Lipschitz} is applicable for $\beta \geq 0$, while for $\beta < 0$ it is generally not applicable.

\subsection{Non-Lipschitz case}

Next, let us examine cases where the coefficients $b_1,b_2, \sigma$ are not Lipschitz continuous. Since, in this generality, the weak existence of solutions is not guaranteed from the literature, below we complement the latter by proving the weak existence of continuous solutions when the function $g$ may be singular in $t=0$. Our proof below employs some tightness arguments. However, in contrast to existing arguments such as \cite{affine_volterra, proemel_scheffels_weak}, here we need to introduce an auxiliary weighted space of continuous functions to control the order of singularity of the function $g$.

\begin{theorem}\label{weakexistence_cm_lg}
    Let $K\colon (0,\infty) \longrightarrow \R_+$ be continuous and nonincreasing. If $(g,K,b,\sigma)$ satisfies Assumption \ref{assumption} and $g$ is continuous on $(0,\infty)$ such that there exists $\delta \in (0,1/2)$ such that for every $T>0$
    \begin{align}\label{eq: g growth condition}
            \sup_{t \in (0,T]}t^{\delta}|g(t)| < \infty,
    \end{align}
    then the stochastic Volterra equation \eqref{eq:sve} admits a continuous weak solution.
\end{theorem}
\begin{proof}
    Without loss of generality, we assume that $K$ is not identically zero since the existence of a continuous solution is otherwise trivial. Following \cite[Lemma 3.6]{proemel_scheffels_weak}, we define for $f\in\{b,\sigma\}$ a sequence of functions
    \begin{align}\label{eq: cm regularization}
    f_n(t,x) = \psi_n(x)\int_\R f(t,x-y)\varphi_n(y)\,\d y,\quad n\in\N,
    \end{align}
    where $\psi_n\colon\R\longrightarrow [0,1]$ with $\operatorname{supp}\psi_n\subset[-(n+1),n+1]$ and $\psi_n(x)=1$ for $x\in[-n,n]$, where $\varphi(y)=\frac{1}{c_n}(1-y^2)^n\1_{[-1,1]}(y)$ with $c_n=\int_{[-1,1]}(1-y^2)^n\,\d y$. Then $(f_n)_{n\in\N}$ is a sequence of Lipschitz continuous functions in the second variable and $\lim_{n\to \infty}f_n(t,\cdot)=f(t,\cdot)$ uniformly on compacts for all $t\geq0$. Moreover, the following growth condition is satisfied uniformly in $n$,
    \begin{equation}\label{eq:uni_lg}
    |b_{n}(t,x)|+|\sigma_n(t,x)|\leq C(1+|x|)^\xi,\quad \text{a.a. }t\geq0,x\in\R.
    \end{equation}
    Let $(X^n)_{n\in\N}$ be the family of processes constructed for $(g,K,b_{n},\sigma_n)$ on some fixed filtered probability space $(\Omega,\F,\mathbb{F},\P)$. Thanks to Proposition \ref{prop_loc_hoelder} and \eqref{eq:uni_lg},
    one readily checks that for $T>0$ and all $s,t\in[0,T]$,
    \begin{equation*}\label{eq:kolomogorov_estimate}
    \sup_{n\in\N}\E[{|(X_t^n - g(t))- (X_s^n - g(s))|^p}] \leq  c |t-s|^{1+\theta}
    \end{equation*}
    holds for some $\theta>0$, $p \geq 2$ and a constant $c$ that only depends on $C$, $T$, $K$, $p$. By applying Kolmogorov's tightness criterion (see e.g. \cite[Chapter XIII, Theorem 1.8]{revuz2004continuous}), we conclude that $(X^n - g)_{n\in\N}$ is a tight sequence of processes and hence has a weakly convergent subsequence on the path space $C(\R_+)$ with limit denoted by $\overline{\P}_g$. By abuse of notation, we denote this subsequence also by $(X^{n}-g)_{n\in\N}$. Let $C(\R_+)_0\subset C(\R_+)$ be the space of continuous functions that vanish at $t=0$. Since $\mathcal{L}(X^n-g)(C(\R_+)_0)=1$ it follows by the Portmanteau theorem that
    \[
    1=\limsup_{n\to\infty}\mathcal{L}(X^n-g)(C(\R)_0)\leq \overline{\P}_g(C(\R_+)_0).
    \]
    Next, to prove weak convergence of $(X^n)_{n \in \N}$, let $C_{\kappa}([0,T])$ with $\kappa > 0$ be the Banach space of continuous functions $f\colon(0,T]\longrightarrow \R$ with finite norm $\norm{f}_{\kappa,T}\coloneqq \sup_{t\in(0,T]}t^{\kappa}|f(t)|$ such that $\lim_{t\downarrow0} t^{\kappa} f(t)$ exists. Note that $\iota\colon C_{\kappa}([0,T])\longrightarrow  C([0,T])$ defined by $\iota(f)(t) = t^\kappa f(t)$ satisfies $\norm{\iota(f)}_{[0,T]} = \norm{f}_{\kappa,T}$, and hence is an isometric isomorphism, whence $C_{\kappa}([0,T])$ is a polish space. Finally, we let $C_\kappa(\R_+)\coloneqq \bigcap_{k\in\N}C_\kappa([0,k])$ be equipped with the metric
    \[
    \varrho(f,g) \coloneqq \sum_{k=1}^\infty 2^{-k}(\norm{f-g}_{\kappa,k}\land 1).
    \]
    By assumption \eqref{eq: g growth condition}, $g \in C_{\kappa}(\R_+)$ for $\kappa > \delta$. Hence $\Phi\colon C(\R_+)_0\longrightarrow C_\kappa(\R_+)$ given by $\Phi(f) = f+g$ defines a continuous mapping. Thus, $(\Phi(X^n-g))_{n\in\N}=(X^n)_{n\in\N}$ is weakly convergent on $C_\kappa(\R_+)$ with weak limit denoted by $\overline{\P}$. Note that $\overline{\P}_g, \overline{\P}$ are related by
    \begin{align}\label{eq: Pg P relation}
     \overline{\P}_g \circ \Phi^{-1} = \overline{\P}.
    \end{align}
    It remains to show that $\overline{\P}$ determines a weak solution of \eqref{eq:sve}.

    Let $L$ be the resolvent of the first kind of $K$\footnote{i.e. a locally finite (positive) measure $L$ such that $L \ast K = K\ast L = 1$. The latter exists since $K\neq0$ is non-negative and non-increasing, see \cite[Theorem 5.5.5]{grippenberg}.}. Let $(M_t^n)_{t \geq 0}$ be the continuous $\P$-martingale for the filtration $\mathbb{F}$ given by
    \[
     M_t^n = \int_0^t \sigma_n(s,X^n_s) \,\d B_s = \int_{[0,t]}(X^n_{t-s} - g(t-s))\, L(\d s) - \int_0^t b_{n}(s,X^n_s)\,\d s
    \]
    where the second equality follows from \cite[Lemma 2.6]{affine_volterra}. Moreover, since \eqref{eq:uni_lg} holds uniformly in $n\in\N$, it follows that $(M_t^n)_{n\in\N}$ is uniformly integrable due to
    \begin{align*}
         \sup_{n\in\N}\E\left[ \sup_{t \in [0,T]}|M_t^n|^2 \right]
         &\leq \sup_{n\in\N}\int_0^T \E\left[\sigma_n(s,X_s^n)^2 \right]\d s
         \\ &\lesssim C \left(T + \sup_{n\in\N}\int_0^T\E\left[ |X_s^n|^2 \right]\d s \right) < \infty.
    \end{align*}

    Define a continuous mapping $\mathcal{M} \colon C(\R_+)_0 \longrightarrow C(\R_+)_0$ by
    \begin{equation*}
        \mathcal{M}_t(w) = \int_{[0,t]}w(t-s)\, L(\d s) - \int_0^t b(s,\Phi(w)(s))\, \d s,
    \end{equation*}
    where the continuity is guaranteed by  \cite[Corollary 3.6.2 (iii)]{grippenberg}.
    Since $X^n - g \in C(\R_+)_0$, by continuous mapping theorem combined with \eqref{eq: Pg P relation}, we obtain \begin{align*}\label{eq: weak convergence martingale}
        M^n = \mathcal{M}(X^n-g) \Longrightarrow \overline{\P}_g \circ \mathcal{M}^{-1} = \overline{\P} \circ (\mathcal{M}^{\Phi})^{-1}
    \end{align*}
    where $\mathcal{M}^{\Phi} \colon\Phi(C(\R_+)_0)\longrightarrow C(\R_+)$ is given by
    \[
     \mathcal{M}_t^{\Phi}(w) \coloneqq \mathcal{M}_t(\Phi^{-1}(w)) = \int_{[0,t]}(w(t-s)- g(t-s))\, L(\d s) - \int_0^t b(s,w(s))\, \d s.
    \]
    Next, we show that $(\mathcal{M}^{\Phi}_t)_{t \geq 0}$ is a continuous $\overline{\P}$-martingale for the natural filtration generated by coordinate process on $C_{\kappa}(\R_+)$, and determine its quadratic variation. The continuity follows from $\Phi(C(\R_+)_0) = \{f\in C_\kappa(\R_+) : f-g\in C(\R_+)_0\}$, \cite[Corollary 3.6.2 (iii)]{grippenberg} and $\overline{\P}(\Phi(C(\R_+)_0)) = \overline{\P}_g(C(\R_+)_0)=1.$
    Let $s<t$, $m\in\N$, $f\colon \R^{m}\longrightarrow \R$ bounded and continuous, and $0\leq s_1\leq\dots\leq s_m\leq s$. Let $(x(t))_{t\geq 0}$ denote the coordinate process $x(t)\colon C_{\kappa}(\R_+; \R) \longrightarrow \R, \ \ w \longmapsto w(t)$. Applying \cite[Theorem 3.5]{billingsley} yields
    \begin{equation}\label{eq:mon_class}
     \E^{\overline{\P}}\left[f(x(s_1),\dots,x(s_m))\left(\mathcal{M}_t^{\Phi} - \mathcal{M}_s^{\Phi}\right) \right]
     = \lim_{n\to \infty}\E[f(X^{n}_{s_1},\dots,X^{n}_{s_m})(M_t^n - M_s^n)] = 0.
    \end{equation}
    A monotone class argument implies that $(\mathcal{M}^{\Phi}_t)_{t \geq 0}$ is a $\overline{\P}$-martingale. To determine its quadratic variation, let us write
     \begin{align*}
     \langle M^n \rangle = \int_0^{\cdot} \left(\sigma_n(s,X_s^n)^2 - \sigma(s,X_s^n)^2\right)\, \d s + \int_0^{\cdot} \sigma(s,X_s^n)^2\, \d s.
     \end{align*}
     Since $X^n \Longrightarrow \overline{\P}$, the continuous mapping theorem implies that
     \[
        \int_0^{\cdot}\sigma(s,X_s^n)^2\, \d s \Longrightarrow \mathcal{L}_{\overline{\P}}\left( \int_0^{\cdot}\sigma(s, x(s))\, \d s \right).
     \]
     Thus, by Slutsky's theorem, it suffices to show that the first term converges to zero in probability. Let $T > 0$, then
    \begin{align*}
        &\ \int_0^T \E\left[ \left|\sigma_n(s,X_s^n)^2 - \sigma(s,X_s^n)^2 \right| \right] \d s
        \\ &\lesssim \left(\int_0^T \E\left[ \sigma_n(s,X_s^n)^2 + \sigma(s,X_s^n)^2 \right] \d s \right)^{1/2} \left(\int_0^T \E\left[ \left( \sigma_n(s,X_s^n) - \sigma(s,X_s^n)\right)^2 \right] \d s \right)^{1/2}
        \\ &\lesssim \left(1 + \sup_{n \in\N}\int_0^T \E[|X_s^n|^2]\, \d s \right)^{1/2} \left(\int_0^T \E\left[ (\sigma_n(s,X_s^n) - \sigma(s,X^n_s))^2\right] \d s \right)^{1/2}
        \\ &\lesssim R^{-\lambda/2}\left(1 + \sup_{n \in\N}\int_0^T \E[|X_s^n|^2] \d s \right)^{1/2} \left( 1 + \sup_{n \in\N}\int_0^T \E\left[ |X_s^n|^{2+\lambda} \right]  \d s \right)^{1/2}
        \\ &\qquad + \left(1 + \sup_{n \in\N}\int_0^T \E[|X_s^n|^2]\, \d s \right)^{1/2} \left(\int_0^T \norm{\sigma_n(s,\cdot)-\sigma(s,\cdot)}_{[-R,R]}^2 \, \d s \right)^{1/2}
    \end{align*}
    where we have used \eqref{eq:uni_lg} and the inequality
    \begin{align*}
    \E[|\sigma_n(s,X^n_s)-\sigma(s,X_s^n)|^2]
    &\lesssim \E\left[\1_{\{|X^n_s|>R\}}\left(|\sigma_n(s,X^n_s)|^2+|\sigma(s,X_s^n)|^2 \right)\right]
    \\ &\qquad \qquad + \norm{\sigma_n(s,\cdot)-\sigma(s,\cdot)}_{[-R,R]}^2
    \\ &\lesssim \frac{C}{R^\lambda}\left( 1 + \E[|X^n_s|^{2+\lambda}]\right)+ \norm{\sigma_n(s,\cdot)-\sigma(s,\cdot)}_{[-R,R]}^2,
    \end{align*}
    where $\lambda= q-2>0$ if $g\in L^{q}_\loc(\R_+), q\in(2,\infty),$ and $\lambda=1$ if $g\in L^\infty_\loc(\R_+)$.
    Thus, letting first $n \longrightarrow  \infty$ and then $R \longrightarrow  \infty$, proves the assertion. Arguing as in \eqref{eq:mon_class} yields that $(\mathcal{M}^\Phi)^2-\int_0^\cdot \sigma(s,x(s))^2\,\d s$ is an $\overline{\P}$-martingale. Consequently, we have shown that $(\mathcal{M}^{\Phi}_t)_{t\geq 0}$ is a continuous $\overline{\P}$-martingale with quadratic variation
    \[
    \langle \mathcal{M}^{\Phi}\rangle_t = \int_0^t \sigma(s,x(s))^2 \,\d s.
    \]
    By martingale representation theorem \cite[Chapter V, Theorem 3.9]{revuz2004continuous}, there exists, possibly on an enlargement of the probability space, a Brownian motion $W$ and a predictable process $\Sigma$ such that $\Sigma_s^2 = \sigma(s,\widehat{X}_s)^2$ and $\mathcal{M}^{\Phi}_t = \int_0^t \Sigma_s\,\d W_s$ where $\widehat{X}$ denotes the processes $x$ considered on the enlargement such that $\mathcal{L}(\widehat{X})=\mathcal{L}(x)$. Consequently, by \cite[Lemma 2.6]{affine_volterra}
    \begin{align*}
     \widehat{X}_t &= g(t) + \int_0^t K(t-s) b(s,\widehat{X}_s)\, \d s + \int_0^t K(t-s) \sigma(s,\widehat{X}_s)\,\d W_s
    \end{align*}
    which completes the proof.
\end{proof}

Remark that $g$ given as in \eqref{eq: g growth condition} automatically satisfies $g \in L^{q}_{\loc}(\R_+)$, $q\in(0,1/\delta)$, and hence condition \eqref{eq: prime condition} is satisfied whenever
\[
    2\xi \delta < \frac{\gamma + \frac{1}{2}\frac{\eta}{2+\eta}}{1 + \frac{1}{\eta}}.
\]

\begin{remark}\label{remark:boundary approxy}
    If $b(t,0)\geq0$ and $\sigma(t,0)=0$ for $t\geq0$, we can always construct approximating sequences $(\widetilde{b}_n)_{n\in\mathbb{N}}$ and $(\widetilde{\sigma}_n)_{n\in\mathbb{N}}$ that preserve these boundary conditions. Specifically, there exist sequences retaining the Lipschitz continuity, the uniform sublinear growth, and the convergence properties stated in the preceding proof, while additionally satisfying
    \begin{equation}\label{eq: b sigma pos}
    \widetilde{b}_n(t,0)\geq0 \quad\text{and}\quad \widetilde{\sigma}_n(t,0)=0,\quad t\geq0, n\in\mathbb{N}.
    \end{equation}
\end{remark}
\begin{proof}
    We define $\widetilde{b}_n$ and $\widetilde{\sigma}_n$ by
    \[
    \widetilde{b}_n(t,x)\coloneqq b_n(t,x)+\max\{0,-b_n(t,0)\} \quad\text{and}\quad \widetilde{\sigma}_n(t,x)\coloneqq \sigma_n(t,x)-\sigma_n(t,0),
    \]
    where $b_n$ and $\sigma_n$ are defined by the regularization \eqref{eq: cm regularization}. It is immediate that $\widetilde{b}_n$ and $\widetilde{\sigma}_n$ satisfy the boundary conditions \eqref{eq: b sigma pos}. Furthermore, because the adjustment terms depend only on $t$, both sequences inherit the uniform Lipschitz continuity in the second variable.
    They also preserve the uniform sublinear growth condition. Indeed, demonstrating this for $\widetilde{b}_n$, we have
    \begin{multline*}
    |\widetilde{b}_n(t,x)|\leq |b_n(t,x)|+\max\{0,-b_n(t,0)\}\leq C(1+|x|)^\xi + |b_n(t,0)|\\ \leq C(1+|x|)^\xi+ C(1+|0|)^\xi \leq 2C(1+|x|)^\xi.
    \end{multline*}
    An identical argument applies to $\widetilde{\sigma}_n$. Finally, thanks to $b(t,0)\geq0$ and $\sigma(t,0)=0$, combined with the pointwise convergence of the original sequences at $x=0$, the adjustment terms vanish as $n \to \infty$.
\end{proof}

As another consequence, we may also prove the existence of nonnegative continuous weak solutions. The latter complements the results obtained in \cite{markovian_structure}.\\

\begin{corollary}\label{weakexistence_cm_lg_nonnegative}
    Let $K\colon (0,\infty) \longrightarrow \R_+$ be continuous with $K\in\widehat{\mathcal{K}}$,  and let $(g,K,b,\sigma)$ satisfy Assumption \ref{assumption} and additionally suppose that
    \[
    b(t,0)\geq0 \quad\text{and}\quad \sigma(t,0)=0.
    \]
    If $g = \widetilde{g} + K \ast h$ is such that $0 \leq \widetilde{g} \in C((0,\infty)) \cap L_{\mathrm{loc}}^2(\R_+)$ is nondecreasing, $0 \leq h \in L_{\mathrm{loc}}^2(\R_+)$, \eqref{eq: g h integrability},\eqref{eq: g growth condition} hold for some $\delta \in (0,1/2)$, then the stochastic Volterra equation \eqref{eq:sve} admits a nonnegative continuous weak solution.
\end{corollary}
\begin{proof}
    Let $X^n$ be the unique strong solution of \eqref{eq:sve} with $b,\sigma$ replaced by $\widetilde{b}_n, \widetilde{\sigma}_n$, i.e.
    \[
        X_t^n = g(t) + \int_0^t K(t-s)\widetilde{b}_n(s,X_s^n)\, \mathrm{d}s + \int_0^t K(t-s)\widetilde{\sigma}_n(s, X_s^n)\, \mathrm{d}B_s.
    \]
    where $\widetilde{b}_n,\widetilde{\sigma}_n$ are defined as in Remark \ref{remark:boundary approxy}.
    By the convergence arguments given in the proof of Theorem \ref{weakexistence_cm_lg}, it suffices to prove that $\P[X^n_t \geq 0,\,t>0]=1$. To prove the latter, we apply Theorem \ref{comparison_cm_Lipschitz} to the pair of processes $X^n$ and $Y \equiv 0$. Indeed, note that $Y \equiv 0$ is the unique solution of
    \[
    Y_t^n = \widehat{g}_n(t) + \int_0^t K(t-s)\widetilde{b}_n(s,Y^n_s) \,\d s + \int_0^t K(t-s)\widetilde{\sigma}_n(s,Y^n_s)\,\d B_s
    \]
    where $\widehat{g}_n(t) = -\int_0^t K(t-s)\widetilde{b}_n(s,0)\, \mathrm{d}s$. It is easy to check that $(g,K,\widetilde{b}_n, \widetilde{\sigma}_n)$ and $(\widehat{g}_n, K, \widetilde{b}_n, \widetilde{\sigma}_n)$ are comparable in the sense of Definition \ref{def: comparable}. Hence Theorem \ref{comparison_cm_Lipschitz} gives $\P[X^n \geq Y = 0,\,t>0]=1$, which proves the assertion.
\end{proof}

Below, we illustrate this existence result by an application to a general one-dimensional nonnegative process with a power-law diffusion coefficient. The latter contains the rough Cox-Ingersoll-Ross process as a particular case.

\begin{example}\label{example: rough VCIR}
    Let $\lambda, \theta, \sigma > 0$, $\gamma_1,\gamma_2 \in (0,1]$, and $\alpha \in (1/2,1)$. Take $g$ of the form
    \[
        g(t) = \widetilde{g}(t) + \int_0^t \frac{(t-s)^{\alpha-1}}{\Gamma(\alpha)} h(s)\, \mathrm{d}s
    \]
    such that $0 \leq \widetilde{g} \in C((0,\infty)) \cap L_{\mathrm{loc}}^2(\R_+)$ is nondecreasing, $0 \leq h \in L_{\mathrm{loc}}^2(\R_+)$, and \eqref{eq: g h integrability}, \eqref{eq: g growth condition} hold for some $\delta \in (0,1/2)$. If Assumption \ref{assumption} is satisfied for
    \[
        X_t = g(t) + \int_0^t \frac{(t-s)^{\alpha-1}}{\Gamma(\alpha)}\lambda\left( \theta - X_s\right)^{\gamma_1}\, \mathrm{d}s + \sigma \int_0^t \frac{(t-s)^{\alpha-1}}{\Gamma(\alpha)},X_s^{\gamma_2}\, \mathrm{d}B_s,
    \]
    then it admits a continuous nonnegative weak solution.
\end{example}

Next, we turn to the comparison principle beyond Lipschitz continuous coefficients. Since a comparison principle as formulated in Theorem \ref{comparison_cm_Lipschitz} would imply pathwise uniqueness of \eqref{eq:sve}, it is in this generality out of reach. However, by approximation, we show that there always exists a couple of solutions $(X^1, X^2)$ defined on a joint filtered probability space that satisfies the comparison principle, i.e., the existence of a monotone coupling.

\begin{theorem}\label{comparison_cm_lg}
    Let $(g_i,K,b_i,\sigma)$, $i=1,2,$ be comparable in the sense of Definition \ref{def: comparable}. Suppose there exists $\delta \in (0,1/2)$ with \eqref{eq: g growth condition} holds for $g_1,g_2$, and that $g_1,g_2$ satisfy \eqref{eq: g h integrability}. Then there exists a continuous weak solution $((\widehat{X}^1,\widehat{X}^2),(\widehat{\Omega},\widehat{\F},\widehat{\mathbb{F}},\widehat{\P}),\widehat{B})$ of the two dimensional stochastic Volterra equation
       \begin{align}\label{eq: joint coupling1}
       \begin{pmatrix}
           \widehat{X}^1_t\\\widehat{X}^2_t
       \end{pmatrix}=
       \begin{pmatrix}
           g_1(t)\\g_2(t)
       \end{pmatrix}+\int_0^t K(t-s)
       \begin{pmatrix}
           b_1(s,\widehat{X}^1_s)\\b_2(s,\widehat{X}^2_s)
       \end{pmatrix}\d s +\int_0^t K(t-s)
       \begin{pmatrix}
           \sigma(s,\widehat{X}^1_s)\\ \sigma(s,\widehat{X}^2_s)
       \end{pmatrix}\d\widehat{B}_s
       \end{align}
       such that $\widehat{\P}[\widehat{X}^1_t\leq \widehat{X}^2_t,\,t>0]=1$. In particular, its marginals are continuous weak solutions of \eqref{eq:sve}.
\end{theorem}
\begin{proof}
Let us define $b_{1,n}, b_{2,n}, \sigma_n$ as in \eqref{eq: cm regularization}. Since $b_1 \leq b_2$, it is clear that this approximation inherits the ordering condition $b_{1,n}\leq b_{2,n}$. Let $(X^n)_{n\in\N}, (Y^n)_{n \in \N}$ be the family of processes constructed for $(g_i,K,b_{i,n},\sigma_n)$ on some fixed filtered probability space $(\Omega,\F,\mathbb{F},\P)$. By Theorem \ref{comparison_cm_Lipschitz}, $\P[X^n_t \leq Y^n_t,t > 0] = 1$.

Following the arguments presented in the proof of Theorem \ref{weakexistence_cm_lg}, we firstly deduce that $((X^n - g_1,Y^n-g_2))_{n\in\N}$ is a tight sequence of processes and hence has a weakly convergent subsequence on the path space $C(\R_+;\R^2)$. Secondly, we extend the spaces $C(\R_+)_0\coloneqq C(\R_+;\R)_0$, $C_\kappa(\R_+)\coloneqq C_\kappa(\R_+;\R)$ in an obvious way to $C(\R_+;\R^2)_0$, $C_\kappa(\R_+;\R^2)$ with $\kappa>\delta$ and infer that $((X^n,Y^n))_{n\in\N}$ is weakly convergent on $C_\kappa(\R_+;\R^2)$ with weak limit denoted by $\overline{\P}.$ To prove the desired comparison result, it is essential to observe that $\overline{\P}$ has inherited the ordering structure. Indeed, let $(x,y)=((x,y)(t))_{t\geq0}$ be the coordinate process
\[
   (x,y)(t)\colon C_\kappa(\R_+;\R^2)\longrightarrow  \R^2,\quad w=(w_1,w_2)\longmapsto(w_1(t),w_2(t)).
 \]
 Let $A\coloneqq\{w=(w_1,w_2)\in C_\kappa(\R_+;\R^2)\mid w_2(t)-w_1(t)\geq0,t>0\}$. Writing
 \[
    A_t\coloneqq\{(w_1,w_2)\in C_\kappa(\R_+;\R^2)\mid w_2(t)-w_1(t)\geq0\}=\pi_t^{-1}([0,\infty))
 \]
 where $\pi_t\colon (C_\kappa(\R_+;\R^2),\varrho)\longrightarrow \R$, $\pi_t(w_1,w_2)=w_2(t)-w_1(t)$ is a continuous function, we conclude that $A_t$ is closed in $((C_\kappa(\R_+;\R^2),\varrho)$. Hence also $A = \bigcap_{t>0}A_t$ is a closed subset of $C_\kappa(\R_+;\R^2)$. An application of the Portmanteau theorem yields
 \begin{equation*}
         \overline{\P}[x(t)\leq y(t),t\geq0] = \overline{\P}[A] \geq \limsup_{n\to \infty}\P_{n}[A] = \limsup_{n\to \infty}\P[X_t^{n}\leq Y_t^{n}, t > 0]=1.
 \end{equation*}
  Finally, we show that $\overline{\P}$ determines a weak solution to the 2-dimensional stochastic Volterra equation \eqref{eq: joint coupling1}. Similarly to the proof of Theorem \ref{weakexistence_cm_lg}, we define the continuous $\P$-martingales
    \[ M^n_t\coloneqq
       \int_0^t \begin{pmatrix} \sigma_n(s,X^n_s) \\ \sigma_n(s,Y^n_s) \end{pmatrix} \,\d B_s
    = \begin{pmatrix}
        \int_{[0,t]}X^n_{t-s}-g_1(t-s)\,L(\d s) -\int_0^t b_{1,n}(s,X^n_s)\,\d s\\ \int_{[0,t]}Y^n_{t-s}-g_2(t-s)\,L(\d s) -\int_0^t b_{2,n}(s,X^n_s)\,\d s
    \end{pmatrix},
    \]
    where $L$ denotes the resolvent of the first kind of  $K$. Moreover, we observe that $M^n\Longrightarrow \overline{\P}\circ\mathcal{M}^{-1}$, where
    $\mathcal{M}$ is on $\{(w_1,w_2)\in C_\kappa(\R_+;\R^2): (w_1-g_1,w_2-g_2)\in C(\R_+;\R^2)_0\}$ given by
    \begin{equation}\label{eq:curly_M}
    \mathcal{M}_t(w_1,w_2) \coloneqq \begin{pmatrix}
        \int_{[0,t)}w_1(t-s)-g_1(t-s)\,L(\d s) - \int_0^t b_{1}(s,w_1(s))\,\d s\\ \int_{[0,t)}w_2(t-s)-g_2(t-s)\,L(\d s) - \int_0^t b_{2}(s,w_2(s))\,\d s\\
    \end{pmatrix}
    \end{equation}
    Furthermore, we can show that $(\mathcal{M}_t)_{t\geq0}$ is a continuous $\overline{\P}$-martingale for the natural filtration generated by the coordinate process on $C_\kappa(\R_+;\R^2)$ and its quadratic variation is given by
    \[
    \langle \mathcal{M}  \rangle_t = \int_0^t \begin{pmatrix}
         \sigma(s,x(s))^2 & \sigma(s,x(s))\sigma(s,y(s))\\ \sigma(s,x(s))\sigma(s,y(s)) & \sigma(s,y(s))^2
     \end{pmatrix} \d s.
    \]

    By martingale representation theorem \cite[Chapter V, Theorem 3.9]{revuz2004continuous}, there exists, possibly on an enlargement of the probability space, a two-dimensional Brownian motion $(W^1,W^2)$ and a $\R^{2\times2}$-valued predictable process $\Sigma$ such that
    \[
    \mathcal{M}(t) = \int_0^t \Sigma_s\,\d\!\begin{pmatrix}
        W^1_s\\W^2_s
    \end{pmatrix} \qquad \text{ with } \qquad \Sigma_s \Sigma_s^{\top} = \begin{pmatrix}
         \sigma(s,\widehat{X}_s)^2 & \sigma(s,\widehat{X}_s)\sigma(s,\widehat{Y}_s)\\ \sigma(s,\widehat{X}_s)\sigma(s,\widehat{Y}_s) & \sigma(s,\widehat{Y}_s)^2
     \end{pmatrix}
    \]
    where $\widehat{X},\widehat{Y}$ denote the processes $x,y$ considered on the enlargement such that $\law(\widehat{X})=\law(x)$, $\law(\widehat{Y})=\law(y)$.
    Note that any other choice of $\widetilde{\Sigma}$ that satisfies $\widetilde{\Sigma}\widetilde{\Sigma}^{\top} = \Sigma \Sigma^{\top}$ gives the same law of $\mathcal{M}$ under $\overline{\mathbb{P}}$. Let us define
    \[
    \widehat{\Sigma}_t = \frac{1}{\sqrt{2}} \begin{pmatrix}
        \sigma(s,\widehat{X}_t) & \sigma(s,\widehat{X}_t)\\ \sigma(s,\widehat{Y}_t) & \sigma(s,\widehat{Y}_t)
    \end{pmatrix} \ \text{ and } \ \widehat{\mathcal{M}}(t) = \int_0^t \widehat{\Sigma}_s \,\d\!\begin{pmatrix}
        W^1_s\\ W^2_s
    \end{pmatrix}.
    \]
    Then $\langle \widehat{\mathcal{M}}\rangle_t = \int_0^t \widehat{\Sigma}_s \widehat{\Sigma}_s^{\top}\,\d s = \int_0^t \Sigma_s \Sigma_s^{\top}\,\d s = \langle \mathcal{M}\rangle_t$. Hence $\widehat{\mathcal{M}}$ and $\mathcal{M}$ have the same law. Consequently, by \eqref{eq:curly_M} and \cite[Lemma 2.6]{affine_volterra},
    \begin{align*}
    \begin{pmatrix}
        \widehat{X}_t\\ \widehat{Y}_t
    \end{pmatrix} &=
    \begin{pmatrix}
        g_1(t)\\g_2(t)
    \end{pmatrix}+
    \int_0^t K(t-s)
    \begin{pmatrix}
        b_1(s,\widehat{X}_s)\\b_2(s,\widehat{Y}_s)
    \end{pmatrix}\d s+\frac{1}{\sqrt{2}}
    \int_0^t K(t-s)
    \begin{pmatrix}
        \sigma(s,\widehat{X}_s)\,\d(W^1_s + W^2_s)\\ \sigma(s,\widehat{Y}_s)\,\d(W^1_s +W^2_s)
    \end{pmatrix}\\
    &=\begin{pmatrix}
        g_1(t)\\g_2(t)
    \end{pmatrix}+
    \int_0^t K(t-s)
    \begin{pmatrix}
        b_1(s,\widehat{X}_s)\\b_2(s,\widehat{Y}_s)
    \end{pmatrix}\d s+
    \int_0^t K(t-s)
    \begin{pmatrix}
        \sigma(s,\widehat{X}_s)\\ \sigma(s,\widehat{Y}_s)
    \end{pmatrix}\d\widehat{B}_s ,
    \end{align*}
     where $\widehat{B}=\frac{1}{\sqrt{2}}(W^1+W^2)$ is a one-dimensional Brownian motion. This completes the proof.

\end{proof}

Also here, let us note that the model given in Example \ref{example: rough VCIR} satisfies the conditions of Theorem \ref{comparison_cm_lg}, and hence fulfils the comparison principle.

\section{Regular Case}\label{section: Regular Case}

\subsection{Characterisation of the comparison principle}

In this section, we consider the case of regular Volterra kernels. Hence, the corresponding Volterra process is a semimartingale. Note that, while for completely monotone Volterra kernels (including $K = 1$), the comparison principle holds, for general Volterra kernels this does not need to be the case. As a first step, we provide a general characterisation of the comparison principle for sufficiently regular drift $b$ and diffusion coefficient $\sigma$.

\begin{proposition}\label{prop:Comparion control}
    Let $b, \sigma: [0,T] \times \R \longrightarrow \R$ be measurable such that
    \begin{itemize}
        \item[(i)] $b(t,x)$ is globally Lipschitz continuous in $x$ and there exist $c>0$, $\kappa\in[0,1)$ such that
        \[
        |b(t,x)|\leq c(1+|x|^\kappa),\quad t\in[0,T],x\in\R;
        \]
        \item[(ii)] $\sigma(t,x)$ is once continuously partially differentiable in $t$ and twice in $x$. Moreover, $\sigma,\partial_x\sigma$ are bounded, globally Lipschitz continuous in $x$ and $1/2$-H\"older continuous in $t$. Finally, for $\partial_t \sigma,\partial_{xx}\sigma$ there exist $c,\eta>0$ such that
        \[
        |\partial_t \sigma(t,x)|+|\partial_{xx}\sigma(t,x)|\leq c(1+|x|^\eta),\quad t\in[0,T],x\in\R.
        \]
    \end{itemize}
    Let $K \in C^1([0,T])$ and for $x \in \R$ let $X^x$ be the unique strong solution of
    \[
        X^x_t = x + \int_0^t K(t-s)b(s, X_s^x) \, \mathrm{d}s + \int_0^t K(t-s)\sigma(s, X_s^x)\, \mathrm{d}B_s,\quad  t\in[0,T].
    \]
    Then the following are equivalent:
    \begin{enumerate}
        \item[(a)] (Comparison property) For any $x\leq y$, we have $\P[X_t^x \leq X_t^y] = 1$ for all $t\in[0,T]$.
        \item[(b)] For a given control $u \in L^{2}([0,T])$, let $x_u(\cdot;x)$ be the unique solution of
        \[
            x_u(t;x) = x + \int_0^t K(t-s)\left( b(s, x_u(s)) - \frac{1}{2}K(0)\sigma(s,x_u(s))\partial_x \sigma(s,x_u(s)) + \sigma(s, x_u(s))u(s) \right)\, \mathrm{d}s.
        \]
        Similarly, let $x_u(\cdot;y)$ be the unique solution where $x$ is replaced by $y$. Then
        \[
            x_u(t;x) \leq x_u(t;y), \qquad t \in [0,T].
        \]
    \end{enumerate}
\end{proposition}
\begin{proof}
    Let us consider the joint process $Z_t^{x,y} = (X_t^x, X_t^y)$ on $\R^2$. Let $\mu_{x,y} := \P \circ (Z^{x,y})^{-1}$ denote the law of $Z^{x,y}$ on $C([0,T]; \R^2)$ equipped with the supremum norm. Define the lower-diagonal of the path space as
    \[
        C_+([0,T]; \R^2) \coloneqq \left\{ (w_1,w_2) \in C([0,T]; \R^2) : w_1 \leq w_2 \right\},
    \]
    Since the evaluation map $(w_1,w_2) \longmapsto (w_1(t), w_2(t))$ is continuous with respect to the supremum norm, $C_+([0,T]; \R^2)$ is a closed subset of $C([0,T]; \R^2)$. By definition, $X^x \leq X^y$ a.s. is equivalent to $\mu_{x,y}( C_+([0,T]; \R^2) ) = 1$. Since the topological support of $\mu_{x,y}$ is the intersection of all closed sets in $C([0,T]; \R^2)$ that have full measure, we find
    \[
        \mu_{x,y}( C_+([0,T]; \R^2) ) = 1 \iff \mathrm{supp}(\mu_{x,y}) \subseteq C_+([0,T]; \R^2).
    \]
    An application of the support theorem \cite[Theorem 1.2]{K21} shows that $\mathrm{supp}(\mu_{x,y}) = \overline{\mathcal{S}}^{\| \cdot \|_\infty}$, where $\mathcal{S} = \left\{ (x_u(\cdot;x), x_u(\cdot;y)) \ : \ u \in L^2([0,T]) \right\}$. Hence, property (a) is equivalent to
    \[
        \mathcal{S} \subseteq \overline{\mathcal{S}}^{\| \cdot \|_\infty} \subseteq C_+([0,T]; \R^2),
    \]
    which readily implies (b). Conversely, condition (b) states that $\mathcal{S} \subseteq C_+([0,T]; \R^2)$, and taking the closure gives $\mathrm{supp}(\mu_{x,y}) = \overline{\mathcal{S}}^{\| \cdot \|_\infty} \subseteq C_+([0,T]; \R^2)$, and hence proves property (a).
\end{proof}

Below we focus on a special case that is of particular interest.

\begin{theorem}
    Let $b, \beta, \sigma, \sigma_0 \in \R$ with $\sigma \neq 0$, and $ K \in C^1([0,T])$. For $x \in \R$ let $X^x$ be the unique strong solution of
    \begin{align}\label{eq: 5}
        X^x_t = x + \int_0^t K(t-s)\left( b + \beta X_s^x\right) \, \mathrm{d}s + \int_0^t K(t-s)\left( \sigma_0 + \sigma X_s^x\right)\, \mathrm{d}B_s,\quad t\in[0,T].
    \end{align}
    Then the following are equivalent:
    \begin{enumerate}
        \item[(a)] (Comparison property) For any $x\leq y$, we have $\P[X_t^x \leq X_t^y] = 1$ for all $t\in[0,T]$.
        \item[(b)] For each control $u \in L^{2}([0,T])$, the unique solution of
        \begin{align}\label{eq: 4}
            x_u(t) = 1 + \int_0^t K(t-s)u(s) x_u(s)\, \mathrm{d}s, \quad t\in[0,T],
        \end{align}
        is nonnegative.
    \end{enumerate}
\end{theorem}
\begin{proof}
    Firstly, we find that $Y^{y-x}_t := X_t^y - X_t^x$ with $x \leq y$ satisfies
    \[
        Y_t^{y-x} = (y - x) + \beta\int_0^t K(t-s) Y_s^{y-x}\, \mathrm{d}s + \sigma \int_0^t K(t-s)Y_s^{y-x}\, \mathrm{d}B_s.
    \]
    Since this equation is linear, uniqueness yields $Y_t^{y-x} = (y-x)Y_t^1$. Hence, the comparison property (a) is satisfied if and only if $Y_t := Y_t^1$ is almost surely nonnegative.

    To further simplify the problem, we remove the drift $\beta$ via a change of measure. Define the shifted Brownian motion
    \[
        \mathrm{d}W_t = \mathrm{d}B_t + \frac{\beta}{\sigma}\mathrm{d}t,
    \]
    and introduce the equivalent probability measure $\mathbb{Q}$ via the Radon-Nikodym derivative
    \[
    Z_T := \frac{\mathrm{d}\mathbb{Q}}{\mathrm{d}\mathbb{P}}\bigg|_{\mathcal{F}_T} = \exp\left( -\frac{\beta}{\sigma}B_T - \frac{\beta^2}{2\sigma^2}T \right).
    \]
    By Girsanov's theorem, $W$ is a standard Brownian motion under $\mathbb{Q}$. Substituting $\mathrm{d}B_t = \mathrm{d}W_t - \frac{\beta}{\sigma}\mathrm{d}t$ into the equation for $Y_t$, the drift term cancels out which gives
    \[
        Y_t = 1 + \sigma \int_0^t K(t-s) Y_s \, \mathrm{d}W_s.
    \]
    Since $\P$ and $\mathbb{Q}$ are equivalent measures, $Y_t \geq 0$ $\P$-a.s. if and only if $Y_t \geq 0$ $\mathbb{Q}$-a.s.

    To characterise the latter, we use the Support Theorem \cite[Theorem 1.2]{K21}. Since $Y_t$ has an unbounded diffusion coefficient, such a result is not directly applicable, and an additional localisation argument needs to be used. Namely, let $\varphi_n\colon \R \longrightarrow \R$ is a smooth cutoff function such that $\varphi_n(y) = y$ for $|y| \leq n$, $|\varphi_n(y)| \leq n+1$ globally, with bounded derivatives, and suppose that $\varphi_n(y) \longrightarrow y$ holds locally uniformly as $n \to \infty$. Then
    \[
        Y_t^n = 1 + \sigma \int_0^t K(t-s) \varphi_n(Y_s^n)\, \mathrm{d}W_s,
    \]
    has for each $n \geq 1$ a unique strong solution. Because the coefficients are globally bounded, an application of the Support Theorem \cite[Theorem 1.2]{K21} gives
    \[
        \mathrm{supp}(\mathbb{Q} \circ (Y^n)^{-1}) = \overline{\mathcal{S}_n}^{\| \cdot \|_\infty},
    \]
    where $\mathcal{S}_n = \left\{ y_v^n  :  v \in L^2([0,T]) \right\}$ is the set of truncated skeleton paths solving
\[
    y_v^n(t) = 1 + \int_0^t K(t-s)\left(\sigma v(s) - \frac{\sigma^2}{2}K(0)\varphi_n'(y_v^n(s))\right)\varphi_n(y_v^n(s))\, \mathrm{d}s.
\]
    Because $\varphi_n'$ is bounded and the skeleton paths are continuous, the shift term $\frac{\sigma^2}{2}K(0)\varphi_n'(y_v^n(\cdot))$ is bounded and thus belongs to $L^2([0,T])$. Consequently, the mapping $v \longmapsto \widetilde{v}$ defined by
    \[
        \widetilde{v}(s) = \sigma v(s) - \frac{\sigma^2}{2}K(0)\varphi_n'(y_v^n(s))
    \]
    is a bijection on $L^2([0,T])$. Thus, we can absorb the correction term into the control variable and, without loss of generality, we may redefine $\mathcal{S}_n$ as the set of paths generated by the simplified equation
    \[
        y_v^n(t) = 1 +  \int_0^t K(t-s)v(s)\varphi_n(y_v^n(s))\, \mathrm{d}s.
    \]
    Define the open ball $U_n \coloneqq \left\{ w \in C([0,T]) : \ \|w\|_\infty < n \right\}$. By pathwise uniqueness, the untruncated process $Y$ and the truncated process $Y^n$ coincide up to the exit time of $U_n$. Therefore, their laws coincide on the open set $U_n$. Similarly, any solution $y_v$ of \eqref{eq: 4} that satisfies $y_v \in U_n$ also solves the truncated equation. Hence $\mathcal{S} \cap U_n = \mathcal{S}_n \cap U_n$, where $\mathcal{S}\coloneqq\{y_v : y_v \text{ solves \eqref{eq: 4} }, v\in L^2([0,T])\}$.

    Let $\mu \coloneqq \mathbb{Q} \circ Y^{-1}$ and $\mu_n \coloneqq \mathbb{Q} \circ (Y^n)^{-1}$.  Due to the Volterra-Gronwall-type inequality, linear Volterra equations do not exhibit finite-time blow-up, meaning that almost all paths of $Y$ are bounded, hence $\lim_{n \to \infty} \mu(U_n) = 1$. Let $V \subseteq C([0,T])$ be open. Then
    \[
        \mu(V) > 0 \ \iff \ \exists n \geq 1:  \mu_n(V \cap U_n) = \mu(V \cap U_n) > 0
    \]
    where the equality $\mu_n(V \cap U_n) = \mu(V \cap U_n)$ follows from $Y = Y^n$ on the event $\{t \leq \tau_n\}$, where $\tau_n = \inf\{ t > 0 \ : \ |Y_t| \geq n \}$. By definition of the support it is equivalent to the existence of $n \geq 1$ such that
    \begin{align}\label{eq: 2}
     \varnothing \neq (V \cap U_n) \cap \mathrm{supp}(\mu_n) = (V \cap U_n) \cap \overline{\mathcal{S}_n}^{\| \cdot \|_\infty}.
    \end{align}
    Since $V \cap U_n$ is an open set, it intersects the closure of $\mathcal{S}_n$ if and only if it intersects $\mathcal{S}_n$ itself. Thus \eqref{eq: 2} is equivalent to the existence of $n \geq 1$ such that
    \begin{align*}
        (V \cap U_n) \cap \mathcal{S}_n \neq \varnothing
        \iff V \cap (\mathcal{S} \cap U_n) \neq \varnothing.
    \end{align*}
    Since every continuous path in $\mathcal{S}$ is bounded, we have $\mathcal{S} \subset \bigcup_{n \in \N} U_n$. Therefore, the existence of such an $n$ is equivalent to
    \[
    V \cap \mathcal{S} \neq \varnothing \iff V \cap \overline{\mathcal{S}}^{\| \cdot \|_\infty} \neq \varnothing,
    \]
    where the second equivalence uses that $V$ is open. To summarise, by the definition of topological support, we conclude that $\mu(V) > 0$ if and only if $V \cap \overline{\mathcal{S}}^{\| \cdot \|_{\infty}} \neq \emptyset$ for all open sets $V$, and hence
    \[
        \mathrm{supp}(\mu) = \overline{ \mathcal{S}}^{\| \cdot \|_\infty}.
    \]
    Finally, to prove the desired equivalence, note that $Y_t\geq0$ $\mathbb{Q}$-a.s. is equivalent to $\mu(C([0,T];\R_+)=1$, and hence by definition of the support, also $\mathrm{supp}(\mu) \subseteq C([0,T]; \R_+)$. Hence, by the above, the comparison property (a) is equivalent to
    \[
    \mathcal{S}\subseteq\overline{\mathcal{S}}^{\|\cdot\|_\infty}=\operatorname{supp}(\mu)\subseteq C([0,T];\R_+)
    \]
    which readily implies (b). Vice versa, condition (b) states that $\mathcal{S}\subseteq C([0,T];\R_+)$. Taking the closure and using that $C([0,T];\R_+)$ is a closed subset of $C([0,T];\R)$, yields property (a).
\end{proof}

Firstly, these characterisations do not contradict our previous section. Indeed, if $K$ is additionally completely monotone, then condition (b) is automatically satisfied by \cite[Theorem C.2]{affine_volterra}, and hence \eqref{eq: 5} satisfies the comparison property.

\begin{remark}[Volterra stochastic exponential]
The Volterra stochastic exponential $\mathcal{E}_t = \mathcal{E}_t(K)$ is defined as the unique solution of
\[
        \mathcal{E}_t = 1 + \int_0^t K(t-s)\mathcal{E}_t\, \mathrm{d}B_s.
\]
The proof of the previous Theorem shows that $\mathcal{E}_t \geq 0$ a.s. if and only if each solution of $x_u(t) = 1 + \int_0^t K(t-s)u(s)x_u(s)\, \mathrm{d}s$ is nonnegative, where $u \in L_{\mathrm{loc}}^2(\R_+)$. In contrast to the Markov case, memory may break the nonnegativity of the stochastic exponential.
\end{remark}

To verify that for a given Volterra kernel, the comparison principle fails, it suffices to find one choice of control $u$ for which \eqref{eq: 4} does not have a nonnegative solution. The next remark provides a simple way to verify this.

\begin{remark}
    Suppose that $\int_0^{\infty} \mathrm{e}^{-\varepsilon t}K(t)\, \mathrm{d}t < \infty$ for each $\varepsilon > 0$. Choosing $u(t) =  -M$ with $M>0$ reduces \eqref{eq: 4} to the resolvent equation
    \[
        x_M(t) + M\int_0^t K(t-s)x_M(s)\, \mathrm{d}s = 1.
    \]
    Taking Laplace transforms, by Bernstein's theorem, we find that $x_M \geq 0$ almost everywhere if and only if the function
    \[
        (0,\infty) \ni z \longmapsto \widehat{x}_M(z) = \frac{1}{z(1+ M\widehat{K}(z))}
    \]
    is completely monotone. Moreover, let $R_M$ be the resolvent of the second kind associated to $M K$, solving $R_M + M K \ast R_M = MK$. Then, by variation of constants,
    \[
        x_M(t) = 1 - \int_0^t R_M(s)\, \mathrm{d}s
    \]
    and hence $x_M \geq 0$ if and only if $\int_0^{t} R_M(s)\, \mathrm{d}s \leq 1$ for all $t \geq 0$.
\end{remark}

\begin{example}
    Let $K(t) = 1 - \mathrm{e}^{-t}$. This kernel is non-negative, smooth, and bounded, but it is not completely monotone. Choosing $u(t) = -M$ with $M>0$, the deterministic skeleton equation becomes
    \[
        x_M(t) + M\int_0^t (1 - \mathrm{e}^{-(t-s)})x_M(s)\, \mathrm{d}s = 1.
    \]
    Differentiating this equation twice with respect to $t$ yields the second-order ODE
    \[
        x_M''(t) + x_M'(t) + M x_M(t) = 0, \qquad x_M(0) = 1, \quad x_M'(0) = 0.
    \]
    Take $M > 1/4$, then its characteristic roots given by $r = -\frac{1}{2} \pm \mathrm{i}\omega$ with $\omega = \sqrt{M - 1/4}$ are complex. The unique solution is given by
    \[
        x_M(t) = \mathrm{e}^{-t/2} \left( \cos(\omega t) + \frac{1}{2\omega} \sin(\omega t) \right).
    \]
    Because of the trigonometric terms, this path oscillates and takes negative values (e.g., $x_M(t) < 0$ at $t = \pi/\omega$). Hence the comparison property does not hold for \eqref{eq: 5}.
\end{example}

\begin{example}[fractional kernel]
    Consider $K(t) = t^\alpha$ with $\alpha > 1$ so that $K$ is continuously differentiable. Setting $u(t) = -M$ such that $M>0$, the deterministic controlled equation becomes
    \[
        x_M(t) + M\int_0^t (t-s)^\alpha x_M(s)\, \mathrm{d}s = 1,
    \]
    and has the unique solution
    \[
        x_M(t) = E_{\alpha+1, 1}\left( -M\Gamma(\alpha+1) t^{\alpha+1} \right),
    \]
    where $E_{\beta, 1}(z) := \sum_{k=0}^\infty \frac{z^k}{\Gamma(\beta k + 1)}$ is the Mittag-Leffler function. By \cite[Proposition 3.10]{mainardi_book} and the argument used in Example \ref{example_frac_OU}, the function $x \mapsto E_{\beta, 1}(-x)$ is non-negative for $x \geq 0$ if and only if $0 \leq \beta \leq 1$. In our case, $\beta = \alpha + 1 > 1$, and hence $x_M$ attains negative values. Thus \eqref{eq: 5} does not satisfy the comparison principle.
\end{example}

\begin{example}
    One might expect that if $K$ is strictly positive and decreasing, the memory effect decays smooth enough to preserve the comparison principle. However, the example $K(t) = 2\mathrm{e}^{-t} - \mathrm{e}^{-2t}$ shows that this is not the case. Clearly $K \geq 0$ and $K' \leq 0$, but $K$ is not completely monotone. Choosing a constant control $u(t) = -M$, the Laplace transform of the kernel is $\widehat{K}(z) = \frac{2}{z+1} - \frac{1}{z+2} = \frac{z+3}{(z+1)(z+2)}$. Substituting this into the resolvent equation yields the Laplace transform
    \[
        \widehat{x}_M(z) = \frac{1}{z\left(1 + M \frac{z+3}{z^2+3z+2}\right)} = \frac{z^2+3z+2}{z(z^2 + (3+M)z + (2+3M))}.
    \]
    This function is not completely monotone for $M = 2$ and hence $x_M$ is not nonnegative. Indeed, if it were completely monotone, its poles would be located on the negative real line, see \cite[Chapter 5, Theorem 2.6]{grippenberg}. However, the roots of the quadratic denominator $z^2 + 5z + 8 = 0$ are given by $z = -\frac{5}{2} \pm i\frac{\sqrt{7}}{2}$. Hence \eqref{eq: 5} does not satisfy the comparison principle.
\end{example}

\subsection{Sufficient conditions for the comparison principle}

Next we move towards stochastic Volterra equations with general initial conditions $g(t)$. Note that, without further assumptions on the drift, a comparison principle may fail. Indeed, let $X^i$ with $i = 1,2$ be the unique solution of
    \[
     X_t^i = x_i \frac{t^{\beta - 1}}{\Gamma(\beta)}  - \int_0^t \frac{(t-s)^{\alpha-1}}{\Gamma(\alpha)}X_s^i\,\d s + \int_0^t \frac{(t-s)^{\alpha-1}}{\Gamma(\alpha)} \,\d B_s
    \]
    where $\alpha > 1$, $\beta > 0$ and $x_1,x_2 \in \R$. Recall that by Example \ref{example_frac_OU}, the comparison principle holds if and only if $E_{\alpha,\beta}(-t^{\alpha}) \geq 0$ for all $t \geq 0$. If $0 < \beta < \alpha$ and $\alpha>1$, then $E_{\alpha,\beta}(-t^{\alpha})$ necessarily takes negative values, and hence the comparison principle fails.

Remark that in the above example, the drift $b(x) = -x$ is non-increasing. The following is our general result on the comparison property for regular kernels.

\begin{theorem}\label{comparison_principle_Dublin1}
    Suppose $K\in W^{1,2}_\loc(\R_+)$ such that $K(0)\geq0$ and $K'\geq0$\footnote{By abuse of terminology, we denote by $K$ its continuous representative and by $K'$ its weak derivative.}. Let $b_1,b_2 \colon  \R \longrightarrow \R$ be Lipschitz continuous, $\sigma \in \R$, and let $X^1,X^2$ be the unique solutions of
    \[
        X_t^i = g_i(t) + \int_0^t K(t-s)b_i(X_s^i)\, \mathrm{d}s + \int_0^t K(t-s)\sigma\, \mathrm{d}B_s.
    \]
    Moreover, suppose that the following conditions hold:
    \begin{enumerate}
        \item[(i)] the ordering condition $b_1(x)\leq b_2(x)$ holds for every $x\in\R$;
        \item[(ii)] at least one of the functions $b_1(\cdot),b_2(\cdot)$ is monotonically non-decreasing;
        \item[(iii)] $g_1,g_2\in C((0,\infty)$ satisfy $g_1\leq g_2$.
    \end{enumerate}
    Then $\P[X^1_t\leq X^2_t,\;t > 0] = 1$.
\end{theorem}
\begin{proof}
 Let us first note that the equations of interest have weak solutions $X^1,X^2$ with sample paths in $L^2_\loc(\R_+)$ due to \cite[Theorem 1.2]{weak_solution}. Moreover, it follows from \cite[Theorem 6.1 (ii)]{weak_solution} that $X^1-g_1,X^2-g_2$ have a modification with continuous sample paths. Hence, $X^1,X^2$ are continuous on $(0,\infty)$. Additionally,  these solutions satisfy for any $T>0, i=1,2$, $\int_0^T\E[|X_t^i|^2]\,\d t<\infty$ c.f. Proposition \ref{prop: existence}. The Yamada-Watanabe type of approach as given in \cite[Proposition B.3]{AbiJaber_ElEuch} yields pathwise uniqueness and hence the Yamada-Watanabe-Engelbert theorem (see Kurtz \cite[Theorem 1.5]{kurtz}) implies strong existence. Without loss of generality, we may assume that $b_1$ is non-decreasing. Since $K \in W_{\loc}^{1,2}(\R_+)$, the classical and stochastic version of Fubini's theorem \cite{veraar_fubini} and \cite[Theorem 8.2]{brezis2010functional}, imply that
   \begin{align*}
	X^1_t-X^2_t = \int_0^t\int_0^s K'(s-r)(b_1(X^1_r)-b_2(X^2_r))\,\d r\,\d s
			+K(0)\int_0^t (b_1(X^1_s)-b_2(X^2_s))\,\d s
			+(g_1-g_2)(t)
	\end{align*}
	which is a continuous semimartingale. Since $K(0)>0$ and $b_1\leq b_2$, and given that $b_1$ is both non-decreasing and Lipschitz continuous, we obtain
		\begin{align*}
			&K(0)\int_0^t (b_1(X^1_s)-b_2(X^2_s))\,\d s 	\\
            &\quad= K(0)\int_0^t (b_1(X^1_s)-b_1(X^2_s))\,\d s + K(0)\int_0^t (b_1(X^2_s)-b_2(X^2_s))\,\d s\\
            &\quad\leq K(0)\int_0^t \1_{\{X^1_s\geq X^2_s\}}(b_1(X^1_s)-b_1(X^2_s))\,\d s + K(0)\int_0^t \1_{\{X^1_s< X^2_s\}}(b_1(X^1_s)-b_1(X^2_s))\,\d s\\
            &\quad \lesssim  K(0)\int_0^t \1_{\{X^1_s\geq X^2_s\}}|X^1_s-X^2_s|\,\d s\\
            &\quad=  K(0)\int_0^t (X^1_s-X^2_s)_+\,\d s.
            \end{align*}
        In the same spirit and by using $K'\geq0$ and Fubini's theorem, we obtain
		\begin{align*}
			&\int_0^t\int_0^s K'(s-r)(b_1(X^1_r)-b_2(X^2_r))\,\d r\,\d s\\   &\quad= \int_0^t \int_0^s K'(s-r) (b_1(X^1_r)-b_1(X^2_r))\,\d r\,\d s
            +\int_0^t\int_0^s K'(s-r) (b_1(X^2_r)-b_2(X^2_r))\,\d r\,\d s
            \\ &\quad\leq \int_0^t\int_0^s K'(s-r) \1_{\{X^1_r\geq X^2_r\}}(b_1(X^1_r)-b_1(X^2_r))\,\d r\,\d s
			\\ &\quad\lesssim  \int_0^t\int_0^s K'(s-r)(X^1_r-X^2_r)_+\,\d r\,\d s
            \\ &\quad= \int_0^t\int_r^t K'(s-r)(X^1_r-X^2_r)_+\,\d s\,\d r
            \\ &\quad= \int_0^t (K(t-r)-K(0))(X^1_r-X^2_r)_+\,\d r.
		\end{align*}
    Consequently, by noting that $g_1-g_2\leq0$, we find that
    $\E[X^1_t-X^2_t] \lesssim \int_0^t K(t-s)\E[(X^1_s-X^2_s)_+]\,\d s$ and since the right-hand side is nonnegative, this implies
    \[\E[(X^1_t-X^2_t)_+] \lesssim \int_0^t K(t-s)\E[(X^1_s-X^2_s)_+]\,\d s.\]
    Finally, Gronwall's inequality for Volterra equations see e.g. \cite[Lemma A.1]{BBF23} implies $\E[(X^1_t-X^2_t)_+]=0$ and by the continuity of the sample paths we conclude $\P[X^1_t\leq X^2_t,\,t>0]=1$.

  \end{proof}

\begin{remark}
    The proof shows that the comparison principle also holds for the kernel $\widetilde{K}\coloneqq -K$ under the new assumption that $b_1(x)\geq b_2(x)$ and at least one of $b_1(\cdot),b_2(\cdot)$ is non-increasing. Furthermore, unlike in Section \ref{section: Non-negativity preserving case}, the presence of additive noise eliminates the need to impose a monotonicity assumption on $g_2-g_1$.
\end{remark}

The fractional kernel $K(t)=\frac{t^{\alpha-1}}{\Gamma(\alpha)}$ with $\alpha>3/2$ satisfies the assumptions stated in Theorem \ref{comparison_principle_Dublin1}. The case $\alpha>1$ can be obtained by approximation similar to Theorem \ref{comparison_cm_lg}. Since uniqueness is not guaranteed in such cases, we show the existence of a monotone coupling.

\begin{theorem}\label{comparison_regular_coupling}
    Suppose that $K \in C^1((0,\infty))$ with $K'\geq0$ and let $(g_i,K,b_i,\sigma)$, $i=1,2,$ satisfy Assumption \ref{assumption}. Suppose that assumptions (i) -- (iii) from Theorem \ref{comparison_principle_Dublin1} are satisfied. Then there exists a continuous weak solution $((\widehat{X}^1,\widehat{X}^2),$ $(\widehat{\Omega},\widehat{\F},\widehat{\mathbb{F}},\widehat{\P}),\widehat{B})$ of the two dimensional stochastic Volterra equation
    \begin{align*}\label{eq: joint coupling}
       \begin{pmatrix}
           \widehat{X}^1_t\\\widehat{X}^2_t
       \end{pmatrix}=
       \begin{pmatrix}
           g_1(t)\\g_2(t)
       \end{pmatrix}+\int_0^t K(t-s)
       \begin{pmatrix}
           b_1(\widehat{X}^1_s)\\b_2(\widehat{X}^2_s)
       \end{pmatrix}\d s + \sigma\int_0^t K(t-s)
       \begin{pmatrix}
           \widehat{X}^1_s\\ \widehat{X}^2_s
       \end{pmatrix}\d\widehat{B}_s
    \end{align*}
    such that $\widehat{\P}[\widehat{X}^1_t\leq \widehat{X}^2_t,\,t>0]=1$.
\end{theorem}
\begin{proof}
    We choose $(b_{1,n})_{n\in\N}$, $(b_{2,n})_{n\in\N}$ as in the proof of Theorem \ref{comparison_cm_lg} and can assume that $b_{1,n}$ or $b_{2,n}$ is non-decreasing. Moreover, define $K_n(t)=K(t+\frac{1}{n}).$ Clearly, $K_n\in W^{1,2}_\loc(\R_+)$ and $K_n\longrightarrow  K$ in $L^2_\loc(\R_+).$ Let $(X^n)_{n\in\N}$, $(Y^n)_{n\in\N}$ be the family of continuous processes constructed by $(g_1,K_n,b_{1,n},\sigma)$ and $(g_2,K_n,b_{2,n},\sigma)$ on some fixed filtered probability space $(\Omega,\F,\mathbb{F},\P)$. Then, by Theorem \ref{comparison_principle_Dublin1}, $\P[X^n_t\leq Y^n_t,\;t>0].$ Using Propsition \ref{prop_loc_hoelder}, we obtain for some $p\geq2$ suitably chosen
    \[
    \sup_{n\in\N}\E[|(X^n_t-g_1(t))-(X^n_s-g_2(s))|^p] + \sup_{n\in\N}\E[|(Y^n_t-g_1(t))-(Y^n_s-g_2(s))|^p] \leq c|t-s|^{1+\theta},
    \]
    where $\theta>0$ and $c$ is a constant that only depends on $p,C,T,K_1$. Define for every $n\in\N$ the continuous semimartingale
    \[
    Z^n_t = \int_0^t \begin{pmatrix}
    b_{1,n}(X^n_s)\\b_{2,n}(Y^n_s)
    \end{pmatrix}\d s + \sigma\int_0^t \begin{pmatrix}
    X^n_s\\ Y^n_s
    \end{pmatrix}\d B_s.
    \]
    Then we also obtain for some $p \geq 2$ suitably chosen
    \[
    \sup_{n\in\N}\E[|Z^n_t-Z^n_s|^p] \leq c|t-s|^{1+\theta},
    \]
    where $\theta>0$ and $c$ is a constant that only depends $C,T,K_1$ and $p$. Hence, we conclude by Kolmogorov's tightness criterion that $((X^n-g_1,Y^n-g_2,Z^n))_{n\in\N}$ is a tight sequence of continuous processes in $\R^4$ and so has a weakly convergent subsequence $((X^n-g_1,Y^n-g_2,Z^n))_{n\in\N}$ on the path space $C(\R_+;\R^4)$.

    Define the continuous mapping $\Phi\colon C(\R_+;\R^2)\times C(\R_+;\R^2)\longrightarrow  C_\delta(\R_+;\R^2)\times C(\R_+;\R^2)$ by
    \[
        \Phi(x,y,z_1,z_2) = (x+g_1,y+g_2,z_1,z_2).
    \]
    By the continuous mapping theorem, $(X^n,Y^n,Z^n)=\Phi(X^n-g_1,Y^n-g_2,Z)\Longrightarrow\widetilde{\P}$ on $C_\delta(\R_+;\R^2)\times C(\R_+;\R^2)$. As in the proof of step 2 in Theorem \ref{comparison_cm_lg}, we can show $\overline{\P}[x(t)\leq y(t),\;t>0]=1$ where $(x,y)$ denotes the projection to the first two coordinates of the coordinate process. Note that $C_\delta(\R_+;\R^2)\times C(\R_+;\R^2)$ is continuously embedded into $L^2_\loc(\R_+;\R^2)\times D(\R_+;\R^2)$ where we have equipped the Skorokhod space $D(\R_+;\R^2)$ with the usual Skorokhod topology, see e.g. \cite[Section 12]{billingsley}. Hence by passing to a subsequence $(X^n,Y^n,Z^n)\Longrightarrow  (\widehat{X},\widehat{Y},\widehat{Z})$ weakly in $L^2_\loc(\R_+;\R^2)\times D(\R_+;\R^2)$. Using \cite[Theorem 1.6]{weak_solution} yields that $(\widehat{X},\widehat{Y},\widehat{Z})$ is a weak solution for the data $((g_1,g_2),K,$ $(b_1,b_2),(\sigma,\sigma)))$ and satisfies $\widehat{\P}[\widehat{X}_t\leq \widehat{Y}_t,t>0]=1.$
\end{proof}

We expect that the results presented in section \ref{section: Regular Case} can be extended to time-dependent coefficients. However, establishing such an extension would require extending the results in \cite{weak_solution} to time-dependent coefficients, which is beyond the scope of this paper, and leave such an extension to the interested reader.

\begin{appendices}

    \section{Existence and uniqueness for Lipschitz coefficients}\label{appendix: Existence and Uniqueness}

    Below we prove a result on the existence and uniqueness of solutions for a stochastic Volterra equation with Lipschitz continuous coefficients and a general driving force $g \in L^q_\loc(\R_+)$, $q\in[2,\infty]$. The following extends \cite[Theorem 4.8]{BBF23} to time-dependent coefficients while, for the sake of simplicity, we stay in the finite-dimensional case.

    \begin{proposition}\label{prop: existence}
        Let $K \in L^2_\loc(\R_+)$ and let $b, \sigma\colon \R_+ \times \R \longrightarrow  \R$ be measurable such that
        \begin{equation}\label{eq:appendixA Lipschitz}
         |b(t,x) - b(t,y)| + |\sigma(t,x) - \sigma(t,y)| \leq C|x-y|,
        \end{equation}
        and
        \begin{equation}\label{eq:ex_uni_growth}
         |b(t,x)| + |\sigma(t,x)| \leq C (1+|x|)
        \end{equation}
        hold for a.a. $t \geq0$ and all $x,y \in \R$ with some constant $C > 0$. Then for each $g \in L^q_\loc(\R_+)$, $q\in[2,\infty]$, there exists a unique strong solution of the equation \eqref{eq:sve}. This solution satisfies $X \in L^p(\Omega, \F, \P; L_{\loc}^q(\R_+))$ for each $p \in [2,q]$ with $p < \infty$. To be more precise, $\norm{X}_{L^p(\Omega;L^q([0,T]))}\leq c$, where $c>0$ is a constant that only depends on $K,T,p,C$\footnote{The dependence on $C$ only comes through the growth estimate \eqref{eq:ex_uni_growth}.}.
    \end{proposition}
    \begin{proof}
        Below we follow the steps outlined in \cite[Theorem 4.8]{BBF23}. Fix $\lambda<0$ and define the functions $K_\lambda(t)=\e{\lambda t}K(t)$ and $g_\lambda(t) =  \e{\lambda  t}g(t)$. Let $X_t(g)$ be a solution of \eqref{eq:sve}. Then $Y^{\lambda}_t(g)=\e{\lambda t}X_t(g)$ satisfies
        \begin{equation}\label{eq:ex_uni_equiv}
            Y_t^{\lambda}(g) = g_\lambda(t) + \int_0^t K_\lambda(t-s)b_\lambda(s,Y_s^{\lambda}(g))\,\d s + \int_0^t K_\lambda(t-s)\sigma_\lambda(s,Y_s^{\lambda}(g))\,\d B_s,
        \end{equation}
        where $b_\lambda(s,x) = \e{\lambda s}b(s,\e{-\lambda s}x)$ and $\sigma_\lambda(s,x) = \e{\lambda s}\sigma(s,\e{-\lambda s}x)$. Conversely, let $Y_\lambda$ by a solution of \eqref{eq:ex_uni_equiv}, then $X^{\lambda}_t(g)=\e{-\lambda t}Y_\lambda(t;g)$ satisfies \eqref{eq:sve}. Therefore, it suffices to prove existence and uniqueness for \eqref{eq:ex_uni_equiv}. We solve \eqref{eq:ex_uni_equiv} by a fixed point argument for the case $p=q$.

        Fix $T>0$, then $g\vert_{[0,T]}\in L^q([0,T]).$ For a given $\lambda<0$, define
        \[
        \mathcal{T}_\lambda(X;g)(t)\coloneqq g_\lambda(t) + \int_0^t K_\lambda(t-s)b_\lambda(s,X_s)\,\d s + \int_0^t K_\lambda(t-s)\sigma_\lambda(s,X_s)\,\d B_s.
        \]
        Then \eqref{eq:ex_uni_equiv} is equivalent to $X^{\lambda} = \mathcal{T}_\lambda(X^{\lambda}(g))$. Below we show that $\mathcal{T_\lambda}(\cdot,g)$ is a contraction on $L^p(\Omega,\F,\P;L^p([0,T]))$ when $\lambda<0$ is small enough. For brevity, we let $\norm{\cdot}_p=\norm{\cdot}_{L^p(\Omega;L^p([0,T])}$. Take $X\in L^p(\Omega,\F,\P;L^p([0,T]))$, then
        \[
        \norm{\mathcal{T}_\lambda(X;g)}_p \leq \norm{g}_{L^p([0,T])} + \norm{K_\lambda\ast b_\lambda(\cdot,X)}_p + \norm{K_\lambda\ast \sigma_\lambda(\cdot,X)\,\d B}_p.
        \]
        For the drift we find, using Young's inequality
        \begin{align*}
            \norm{K_\lambda\ast b_\lambda(\cdot,X)}_p &= \norm*{ \norm{K_\lambda\ast b_\lambda(\cdot,X)}_{L^p([0,T])} }_{L^p(\Omega)} \\
            &\leq \norm{K_\lambda}_{L^1([0,T])} \,\norm*{ \norm{b_\lambda(\cdot,X_s)}_{L^p([0,T])} }_{L^p(\Omega)}\\
            &\leq C\norm{K_\lambda}_{L^1([0,T])}\norm{1+X}_p.
        \end{align*}
    For the stochastic convolution, we find using BDG and Young's inequality
    \begin{align*}
        \norm{K_\lambda\ast\sigma_\lambda(\cdot,X)\,\d B}_p &\leq c_p\left( \E\left[ \norm{K_\lambda^2\ast \sigma_\lambda(\cdot,X)^2}_{L^{p/2}([0,T])}^{p/2} \right] \right)^{1/p}\\
        &\leq c_p\norm{K^2_\lambda}_{L^1([0,T])}^{1/2}\left( \E\left[ \norm{\sigma_\lambda(\cdot,X)^2}_{L^{p/2}([0,T])}^{p/2} \right] \right)^{1/p}\\
        &\leq c_p\norm{K_\lambda}_{L^2([0,T])}\left( \E\left[ \norm{C(1+|X|)}_{L^p([0,T])}^p \right] \right)^{1/p}\\
        &\leq Cc_p\norm{K_\lambda}_{L^2([0,T])}\norm{1+X}_p.
    \end{align*}
    Hence, $\mathcal{T}_\lambda(\cdot;g)$ leaves the space $L^p(\Omega,\F,\P;L^p([0,T]))$ invariant. In the same way, if $X,Y\in L^p(\Omega,\F,\P;L^p([0,T]))$, we obtain
    \begin{align*}
    \norm{\mathcal{T}_\lambda(X;g)-\mathcal{T}_\lambda(Y;g)}_p &\leq \norm{K_\lambda\ast (b_\lambda(\cdot,X)-b_\lambda(\cdot,Y))}_p  + \norm{K_\lambda\ast (\sigma_\lambda(\cdot,X)-\sigma_\lambda(\cdot,Y))}_p\\
    &\leq C\max\{\norm{K_\lambda}_{L^1([0,T])},c_p\norm{K_\lambda}_{L^2([0,T])}\}\norm{X-Y}_p.
    \end{align*}

    Since by dominated convergence $\norm{K_\lambda}_{L^1([0,T])}, \ \norm{K_\lambda}_{L^2([0,T])} \longrightarrow  0$ as $\lambda\longrightarrow  -\infty$, we can choose $\lambda<0$ sufficiently small such that $\mathcal{T}_\lambda(\cdot;g)$ is a contraction and, hence, has a unique fixed point $Y_\lambda(\cdot;g)$ which is the unique solution of \eqref{eq:ex_uni_equiv}. The same fixpoint argument can be executed in the space $L^p(\Omega.\F,\P;L^{\infty}([0,T]))$, provided that $g \in L_\loc^{\infty}(\R_+)$. Using similar bounds as above, it is easy to derive, for $p\in[2,q]$ and a.a. $t\geq0$
        \[
        \E[|X_t|^p] \leq |g(t)|^p + A_0 + A_1\int_0^t K(t-s)^2\E[|X_s|^p]\,\d s,
        \]
        where $A_0, A_1$ are constants that only depends on $K,C,T,p$. Using the Volterra type Gronwall inequality (see e.g. \cite[Lemma A.1]{BBF23} yields for a.a. $t\geq0$
        \begin{equation}\label{eq:volterra_gronwall}
            \E[|X_t|^p]\leq |g(t)|^p + A_0 + \int_0^t R(t-s)(|g(s)|^p + A_0)\,\d s,
        \end{equation}
        where $R\in L^1([0,T])$ denotes the resolvent of the second kind of $A_1 K^2$. Since $|g|^p\in L^{q/p}([0,T])$, the right-hand side of \eqref{eq:volterra_gronwall} belongs to $L^{q/p}([0,T]).$ Thus, $\norm{X}_{L^p(\Omega;L^q([0,T]))}\leq c$, where $c$ is a constant that only depends on $K,C,T,p$.
  \end{proof}

    \begin{proposition}\label{prop_loc_hoelder}
        Suppose that $(g,K,b,\sigma)$ satisfies Assumption \ref{assumption}. Let $X$ be any solution of \eqref{eq:sve}. Then the following assertions hold:
        \begin{enumerate}
            \item Under Assumption \ref{assumption}.(i), $X-g$ has, for each $\theta<\gamma$, a continuous modification with locally $\theta$-H\"older continuous sample paths.
            \item Under Assumption \ref{assumption}.(ii), $X-g$ has, for each $\theta<\vartheta$ with
            \[
                 \vartheta \coloneqq \gamma + \frac{1}{2}\frac{\eta}{2+\eta} - \frac{2\xi}{q}\left(1 + \frac{1}{\eta}\right),
            \]
            a continuous modification with locally $\theta$-H\"older continuous sample paths.
        \end{enumerate}
    \end{proposition}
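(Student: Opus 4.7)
The natural strategy is to verify Kolmogorov's continuity criterion for the process $Y_t \coloneqq X_t - g(t)$. Fix $T > 0$ and for $0 \le s \le t \le T$ decompose
\begin{equation*}
Y_t - Y_s = A_1 + A_2 + A_3 + A_4,
\end{equation*}
where
\begin{align*}
A_1 &= \int_0^s [K(t-r)-K(s-r)]\,b(r,X_r)\,\d r, &
A_2 &= \int_s^t K(t-r)\,b(r,X_r)\,\d r,\\
A_3 &= \int_0^s [K(t-r)-K(s-r)]\,\sigma(r,X_r)\,\d B_r, &
A_4 &= \int_s^t K(t-r)\,\sigma(r,X_r)\,\d B_r.
\end{align*}
I would bound $\E[|A_i|^p]$ for suitable $p \ge 2$, using Jensen's inequality on the drift pieces $A_1,A_2$ and the Burkholder-Davis-Gundy inequality on the stochastic convolutions $A_3,A_4$. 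The goal is an estimate $\E[|Y_t-Y_s|^p] \le c\,|t-s|^{1+\beta(p)}$ with $\beta(p) > 0$; Kolmogorov's criterion then provides a modification with locally $\theta$-H\"older sample paths for every $\theta < \beta(p)/p$, and optimising over $p$ recovers the sharp exponent.

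Under case (i), Assumption \ref{assumption}(i) gives the deterministic bounds $\|K\|_{L^2([0,h])}^2 + \|K(\cdot+h)-K(\cdot)\|_{L^2([0,T])}^2 \lesssim h^{2\gamma}$. Either $\xi = 0$ (so $b,\sigma$ are uniformly bounded) or $g \in L^\infty_{\loc}(\R_+)$, and in both cases Proposition \ref{prop: existence} combined with the linear-growth hypothesis supplies $\sup_{r \in [0,T]}\E[|X_r|^p] < \infty$ for every finite $p \ge 2$. Routine Jensen-plus-BDG bookkeeping then produces $\E[|Y_t-Y_s|^p] \lesssim (t-s)^{p\gamma}$ for every $p \ge 2$. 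Kolmogorov's criterion with $p$ arbitrarily large covers every $\theta < \gamma$.

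Under case (ii), the key step is to decouple the product $K^2 \sigma^2$ in $A_3,A_4$ via H\"older's inequality with conjugate exponents $(2+\eta)/2$ and $(2+\eta)/\eta$, so that the $K$-factor becomes the $L^{2+\eta}$-norm controlled by Assumption \ref{assumption}(ii), paired with an $L^{2(2+\eta)/\eta}$-norm of $\sigma(\cdot,X_\cdot)$; an analogous pairing handles the drift integrals $A_1,A_2$. Using $|b|+|\sigma| \lesssim (1+|X|)^\xi$ and a further H\"older decoupling in time reduces the coefficient factor to a power of $\int_0^T (1+|X_r|)^q\,\d r$, whose expectation is finite by Proposition \ref{prop: existence} provided $p\xi \le q$. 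The short-interval pieces $A_2,A_4$ gain an additional factor $(t-s)^{p\eta/(2(2+\eta))}$ from the length of $[s,t]$, while on $A_1,A_3$ the H\"older-regularity of $K$ alone contributes $(t-s)^{p\gamma}$. Tracking all exponents and letting $p$ tend to its permitted upper bound delivers Kolmogorov exponents approaching $\vartheta$, covering every $\theta < \vartheta$.

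The main obstacle is the careful bookkeeping of exponents in case (ii): the precise form of $\vartheta$ reflects an optimal pairing of H\"older conjugates between $K$ and the coefficients, constrained by the mixed-norm integrability bound $X \in L^p(\Omega; L^q([0,T]))$ from Proposition \ref{prop: existence}, and condition \eqref{eq: prime condition} is precisely what guarantees $\vartheta > 0$ so that Kolmogorov's criterion applies with a strictly positive H\"older exponent.
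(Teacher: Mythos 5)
Your overall strategy mirrors the paper's: decompose $Y_t-Y_s$ into four pieces, bound $p$-th moments via Jensen/H\"older on the drift parts and BDG/It\^o plus H\"older on the stochastic parts (the paper uses It\^o isometry followed by Jensen, which is the same tool), and invoke Kolmogorov's criterion. Case (i) is handled exactly as in the paper: with $\sup_{r \leq T}\E[|X_r|^p]<\infty$ for every $p$, each piece yields $(t-s)^{p\gamma}$ and sending $p\to\infty$ gives $\theta<\gamma$.

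For case (ii) there is a genuine gap in your exponent accounting. You state that the long-interval shift pieces $A_1,A_3$ contribute only $(t-s)^{p\gamma}$, coming from the $L^{2+\eta}$-H\"older modulus $\norm{K(\cdot+h)-K}_{L^{2+\eta}([0,T])}\lesssim h^\gamma$; this is indeed what a direct Jensen-plus-H\"older pass produces, because the shift is taken over the whole interval $[0,s]$ and no length factor $(t-s)$ is available. But if $(t-s)^{p\gamma}$ were the binding term, Kolmogorov would give only $\theta<\gamma-\tfrac1p$; with the admissible maximum $p_{\max}=\tfrac{\eta q}{(2+\eta)\xi}$ (forced by $\xi p\tfrac{2+\eta}{\eta}\le q$ so that Proposition~\ref{prop: existence} controls the coefficient moment), one computes
\[
\gamma-\frac{1}{p_{\max}}
=\vartheta-\left(\frac{\eta}{2(2+\eta)}-\frac{\xi}{q}\right),
\]
and condition~\eqref{eq: prime condition} together with $\gamma\le 1/2$ forces $q>2\xi(2+\eta)/\eta$, so the correction in parentheses is strictly positive. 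Thus the H\"older exponent you actually obtain is strictly below $\vartheta$, contradicting your final claim that ``letting $p$ tend to its permitted upper bound delivers Kolmogorov exponents approaching $\vartheta$.'' The paper sidesteps this by asserting that $I_2,I_4$ (the shift pieces) satisfy ``the same estimate'' as the short-interval pieces, namely $(t-s)^{\gamma p+\frac{\eta}{2+\eta}(\frac p2-1)}$, which rearranges to $(t-s)^{1+p\vartheta}$ with $p=p_{\max}$; a careful proof would have to establish that stronger bound for the shift pieces, not merely the $(t-s)^{p\gamma}$ you record. A secondary discrepancy: your short-interval gain $(t-s)^{p\eta/(2(2+\eta))}$ should be $(t-s)^{\frac{\eta}{2+\eta}(\frac p2-1)}$; the missing additive $-\tfrac{\eta}{2+\eta}$ is precisely what combines with the $-1$ from Kolmogorov to produce $\vartheta$ rather than $\vartheta+\tfrac{\xi}{q}$.
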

    \begin{proof}
    Firstly, let us consider the case (ii). Since $q \geq 2\xi\left( 1+ \frac{2}{\eta}\right)$, we find $p\coloneqq\frac{\eta q}{(2+\eta)\xi}\geq 2$. Then, for $0<s<t\leq T,$
        \begin{align*}
            &|(X_t-g(t))-(X_s-g(s))|^p\\ &\quad\lesssim \left| \int_s^t K(t-r)b(r,X_r)\,\d r \right|^p + \left| \int_0^s (K(t-r)-K(s-r))b(r,X_r)\,\d r \right|^p\\
            &\qquad+ \left| \int_s^t K(t-r)\sigma(r,X_r)\,\d B_r \right|^p + \left| \int_0^s (K(t-r)-K(s-r))\sigma(r,X_r)\,\d B_r \right|^p\\
            &\quad\eqqcolon I_1 + I_2 + I_3 + I_4
        \end{align*}
        An application of Jensen's and H\"older's inequality yields
        \begin{align}\label{eq:loc_hoelder1}
            \E[I_1] &\leq \left( \int_s^t |K(t-r)|\,\d r \right)^{p-1}\E\left[ \int_s^t |K(t-r)||b(r,X_r)|^p\,\d r \right]
            \\ \notag &\leq C^p\norm{K}_{L^1([0,t-s])}^{p-1}\E\left[\int_s^t |K(t-r)|(1+|X_r|)^{\xi p}\,\d r\right]
            \\ \notag &\leq C^p\norm{K}_{L^{2+\eta}([0,t-s])}^{p-1}(t-s)^{\frac{1+\eta}{2+\eta}(p-1)}\norm{K}_{L^{2+\eta}([0,t-s])}\E\left[\left(\int_s^t (1+|X_r|)^{\xi p\frac{2+\eta}{1+\eta}}\,\d r\right)^{\frac{1+\eta}{2+\eta}}\right]\\
            \notag &\leq C^pc(T,\eta)^p (t-s)^{\gamma p +  \frac{1+\eta}{2+\eta}(p-1)} \int_0^T \E\left[(1+|X_r|)^{q}\right]\d r
        \end{align}
        where the finiteness of the expectation follows from Proposition \ref{prop: existence}. Analogously, we obtain from It\^o's isometry, Jensen's  and H\"older's inequality
        \begin{align}
        \E[I_3] &= \E\left[ \left( \int_s^t K(t-r)^2\sigma(r,X_r)^2 \,\d r\right)^{\frac{p}{2}} \right]\nonumber\\
        &\leq C^p\left( \int_s^t K(t-r)^2\,\d r \right)^{\frac{p}{2}-1}\E\left[ \int_s^t K(t-r)^2(1+|X_r|)^{\xi p}\,\d r \right]\label{eq:loc_hoelder2}
        \\ &\leq C^p \norm{K}_{L^{2+\eta}([0,t-s])}^{p-2}(t-s)^{\frac{\eta}{2+\eta}\left(\frac{p}{2}-1\right)} \norm{K}_{L^{2+\eta}([0,t-s])}^2\E\left[\left( \int_s^t (1+|X_r|)^{\xi p\frac{2+\eta}{\eta}}\,\d r \right)^{\frac{2}{2+\eta}}\right]\nonumber\\
        &\leq C^p c(T,\eta)^p(t-s)^{\gamma p + \frac{\eta}{2+\eta}\left(\frac{p}{2}-1\right)} \int_0^T \E\left[(1+|X_r|)^{q}\right]\d r.\nonumber
        \end{align}
        The same estimate can be derived for $\E[I_2],\E[I_4].$ Thus, using $p \geq 2$, gives $(1+\eta)(p-1) \geq \eta \left(\frac{p}{2}-1\right)$ and hence we obtain
        \begin{align*}
            \left(\E[|(X_t-g(t)) - (X_s-g(s))|^p] \right)^{\frac{1}{p}}
            \leq c (t-s)^{\frac{1}{p}+\gamma + \frac{\eta}{2+\eta}\left(\frac{1}{2}- \frac{1}{p}\right)-\frac{1}{p}} = c(t-s)^{\frac{1}{p} + \vartheta},
        \end{align*}
        where $c>0$ is a constant that only depends on $p$,$\eta$, $C$, $T$ but not on $s$ or $t$, and the last equality follows from the particular form of $p$. Note that $\vartheta > 0$ if and only if
        \[
            q> 2\xi \frac{1 + \frac{1}{\eta}}{\gamma + \frac{1}{2}\frac{\eta}{2+\eta}}
        \]
        which is satisfied by Assumption \ref{assumption}.  The H\"older continuity now follows from the Kolmogorov-Chentsov theorem.

        For the case of Assumption \ref{assumption}.(i), we let $p \geq 2$ be arbitrary. By following similar arguments to \eqref{eq:loc_hoelder1} and \eqref{eq:loc_hoelder2} and using $\esssup_{r\in[0,T]}\E[|X_r|^p]<\infty$ due to Proposition \ref{prop: existence}, we find
        \[
            \E[|(X_t-g(t))-(X_s-g(s))|^p]\lesssim (t-s)^{p\gamma}.
        \]
        This proves the assertion.
        \end{proof}

        If $g\in L^\infty_\loc(\R_+)$, then the above result essentially coincides with \cite{affine_volterra}. For the case of Assumption \ref{assumption}.(ii), it gives a new way of proving the existence of continuous solutions.\\

        We conclude this section by establishing a stability result, demonstrating that strong solutions to SVEs depend continuously on the choice of the Volterra kernel.
    \begin{lemma}\label{lemma kernel approx}
        Suppose that $(g,K,b,\sigma)$ and $(g,\overline{K},b,\sigma)$ satisfy Assumption \ref{assumption}. Suppose further that the coefficients $b$ and $\sigma$ satisfy the Lipschitz conditions \eqref{eq:appendixA Lipschitz}. Let $X$ and $\overline{X}$ be the strong solutions to \eqref{eq:sve} for the given data. Then, for every $T>0$, there exists a constant $C>0$ such that
        \begin{equation*}
        \mathbb{E}[|X_t-\overline{X}_t|^2] \leq C \left( \int_0^T |K(s)-\overline{K}(s)|^{2+\eta} ds \right)^{\frac{2}{2+\eta}}, \quad t \in [0,T].
        \end{equation*}
    \end{lemma}
    \begin{proof}
        For fixed $t\in[0,T]$, we find by Jensen's inequality and It\^o's isometry
    \begin{align*}
        \E[|X_t-\overline{X}_t|^2] &\lesssim \int_0^t |K(t-s)-\overline{K}(t-s)|^2\E[b(s,X_s)^2]\,\d s\\
        &\quad+ \int_0^t \overline{K}(t-s)^2\E[|b_i(s,X_s)-b_i(s,\overline{X}_s)|^2]\,\d s \\
        &\quad+ \int_0^t |K(t-s)-\overline{K}(t-s)|^2 \E[\sigma(s,X_s)^2]\,\d s\\
        &\quad+ \int_0^t \overline{K}(t-s)^2\E[|\sigma(s,X_s)-\sigma(s,\overline{X}_s)|^2]\,\d s
        \\ &\lesssim \int_0^t |K(t-s)-\overline{K}(t-s)|^2\E[(1+|X_s|)^{2\xi}]\,\d s
        \\ &\qquad +  \int_0^t \overline{K}(t-s)^2 \E[|X_s-\overline{X}_s|^2]\,\d s
    \end{align*}
    where we have used the sublinear growth and Lipschitz condition. For the first term, we find by using H\"older's inequality
    \begin{align*}
    &\int_0^t |K(t-s)-\overline{K}(t-s)|^2\E[(1+|X_s|)^{2\xi}]\,\d s
    \\ &\quad\leq \left( \int_0^T |K(s)-\overline{K}(s)|^{2+\eta}\,\d s \right)^{\frac{2}{2 + \eta}}\E\left[\left(\int_0^T (1+|X_s|)^{2\xi\left(1 + \frac{2}{\eta}\right)}\,\d s \right)^\frac{\eta}{2 + \eta} \right]
    \\ &\quad \leq \left( \int_0^T |K(s)-\overline{K}(s)|^{2+\eta}\,\d s \right)^{\frac{2}{2 + \eta}} \int_0^T \E\left[ (1+|X_s|)^{2\xi\left(1 + \frac{2}{\eta}\right)} \right]\, \d s
    \end{align*}
    where the second term is finite due to Assumption \ref{assumption}
    \[
        q > 2\xi \frac{1 + \frac{1}{\eta}}{\gamma + \frac{1}{2}\frac{\eta}{2+\eta}}
        \geq 2\xi \frac{1+\frac{1}{\eta}}{\frac{1}{2} + \frac{1}{2}\frac{\eta}{2+\eta}} = 2\xi \left(1 + \frac{2}{\eta}\right)
    \]
    and Proposition \ref{prop: existence}. Applying the Volterra-type Gronwall inequality yields
    \begin{multline*}
        \E[|X_t - \overline{X}_t|^2]\\ \leq \left( \int_0^T |K(s)-\overline{K}(s)|^{2+\eta}\,\d s \right)^{\frac{2}{2 + \eta}} \left(\int_0^T \E\left[ (1+|X_s|)^{2\xi\left(1 + \frac{2}{\eta}\right)} \right] \d s\right) \left(1+\int_0^T R(s)\,\d s \right),
    \end{multline*}
    where $R$ denotes the resolvent of the second kind of $c\overline{K}^2$, and $c$ is some large enough constant. Noting that $R\in L^1_{\loc}(\R_+)$ proves the assertion.
    \end{proof}

\section{Splitting Method for Stochastic Volterra Equations}\label{appendix_splitting}
    Given any solution $X$ of \eqref{eq:sve} where $g\in L^2_\loc(\R_+)\cap C((0,\infty))$ and $b,\sigma\colon\R_+\times\R\longrightarrow\R$ are measurable such that there exists $C\in L^2_\loc(\R_+)$ with
    \begin{align}\label{eq: lipschitz L2}
        |b(t,x)-b(t,y)| + |\sigma(t,x)-\sigma(t,y)|\leq C(t)|x-y|,\quad |b(t,x)|^2 + |\sigma(t,x)|^2 \leq C^2(t)(1+|x|^2),
    \end{align}
    for a.a. $t\geq0$ and $x,y\in\R$. Let $T>0$, $N\in\N$ be fixed and $t_k= kT/N,$ $k\in\{0,1,\dots,N\},$ be an equidistant grid of $[0,T]$ with step size $T/N.$  Let us denote by $(\widehat{X}_t)_{t\in(0,T]}$ its approximation
    \begin{align}
    \widehat{X}_t &= g(t) + \sum_{k=1}^{N} \1_{[t_k,T]}(t)\left(K(t-t_k) \int_{t_{k-1}}^{t_k}[b(s,\xi^{k}_s)\,\d s + \sigma(s,\xi^{k}_s)\,\d B_s]\right)\nonumber
     \\
     &= g(t) + \int_0^{t_{\eta(t)}} K(t-t_{\eta(s)}) [b(s,\xi_s)\,\d s + \sigma(s,\xi_s)\,\d B_s]\label{eq:xhat_eta},
    \end{align}
    where $\eta(t)=k$ if $t\in[t_k,t_{k+1})$ and $\xi_t  \coloneqq \sum_{k=1}^N \1_{[t_{k-1},t_{k})}(t)\xi_t^{k}$ with
    \[
    \xi^{k}_t = \widehat{X}_{t_k-} + \int_{t_{k-1}}^t K(0_+)[b(s,\xi^{k}_s)\,\d s + \sigma(s,\xi^{k}_s)\,\d B_s],\quad t\in[t_{k-1},t_{k}).
    \]
    Denote by
    \[
        \omega_{f,T}(\delta)\coloneqq \sup_{s,t\in[0,T]:|s-t|\leq\delta}|f(s)-f(t)|
    \]
    the \emph{$\delta$-modulus of continuity} of a function $f\colon[0,T]\longrightarrow \R$ and by $\norm{f}_{[0,T]}\coloneqq\sup_{t\in[0,T]}|f(t)|$ its sup-norm. Below we extend the approximation \cite[Lemma 3.1, Proposition 3.1]{alfonsi2023} to the case of time-dependent coefficients and possibly singular $g$. As a first step, we prove a refined bound on the approximation where $g$ is regular.

    \begin{proposition}\label{prop_approx}
       Suppose that $K$ is nonnegative, nonincreasing and continuous on $\R_+$, and that $g \in C(\R_+)$  Then, for every $T>0$, it holds that
       \begin{equation}\label{eq:prob_approx_L2}
        \sup_{t\in[0,T]}\E[|\widehat{X}_t|^2 + |\xi_t|^2] \lesssim 1+\norm{g}_{[0,T]}^2
       \end{equation}
       and
        \begin{multline*}
           \sup_{t\in[0,T]}\left( \E[|\widehat{X}_t-\xi_t|^2] + \E[|\widehat{X}_t-X_t|^2] \right)\\
           \lesssim \omega^2_{g,T}\left(\frac{T}{N}\right) + \left( 1+ \|g\|_{[0,T]}^2\right)\left(\omega^2_{K,T}\left(\frac{T}{N}\right) + \sup_{1\leq k\leq N}\int_{t_{k-1}}^{t_k}C(s)^2\,\d s \right).
        \end{multline*}
    \end{proposition}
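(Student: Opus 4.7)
The plan is to adapt Alfonsi's argument to the time-dependent Lipschitz setting in three steps: an a priori $L^2$-moment bound, a local comparison of $\widehat{X}$ with the SDE $\xi$, and a global comparison of $\widehat{X}$ with $X$ via a Volterra Gronwall inequality. Throughout I would work with the compact representation \eqref{eq:xhat_eta} and exploit the key subdomination $K(t-t_{\eta(s)}) \leq K(t-s)$, which holds because $K$ is nonincreasing and $t_{\eta(s)} \leq s$.

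For the a priori bound, I would apply Jensen's inequality and the \Ito isometry to \eqref{eq:xhat_eta} together with \eqref{eq: lipschitz L2} to obtain
\[
\E[|\widehat{X}_t|^2] \lesssim \|g\|_{[0,T]}^2 + \int_0^t \left(K(t-s)^2 + \|K\|_{L^2([0,T])}^2\right) C(s)^2 (1 + \E[|\xi_s|^2])\,\d s.
\]
On each interval $[t_{k-1},t_k)$ the process $\xi^k$ is an SDE with initial value $\widehat{X}_{t_k-}$, so a scalar Gronwall estimate yields $\E[|\xi_t|^2] \lesssim (1 + \E[|\widehat{X}_{t_k-}|^2])$ with multiplicative constant bounded by $\exp(c K(0)^2\|C\|_{L^2([0,T])}^2)$, uniform in $k$. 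Substituting this into the inequality above and applying the Volterra Gronwall inequality (e.g.\ \cite[Lemma A.1]{BBF23}) with kernel $(K(t-s)^2 + \|K\|_{L^2}^2)C(s)^2$ produces \eqref{eq:prob_approx_L2}.

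For the local comparison I would use $\xi_{t_{k-1}}^k = \widehat{X}_{t_k-}$ together with the telescoping identity $\widehat{X}_t - \widehat{X}_{t_k-} = (g(t)-g(t_k)) + \sum_{\ell<k}(K(t-t_\ell)-K(t_k-t_\ell))\int_{t_{\ell-1}}^{t_\ell}[\cdots]$ to obtain, for $t \in [t_{k-1},t_k)$,
\[
\widehat{X}_t - \xi_t = (g(t) - g(t_k)) + \sum_{\ell=1}^{k-1}(K(t-t_\ell) - K(t_k-t_\ell))\int_{t_{\ell-1}}^{t_\ell}[b(r,\xi_r)\,\d r + \sigma(r,\xi_r)\,\d B_r] - K(0)\int_{t_{k-1}}^t[b(s,\xi_s)\,\d s + \sigma(s,\xi_s)\,\d B_s].
\]
The first bracket is pointwise $\omega_{g,T}(T/N)$; the second term is bounded in $L^2$ by $\omega_{K,T}(T/N)$ times a sum of \Ito integrals whose squared expectation is controlled by Step 1; the last integral contributes $K(0)^2\sup_k\int_{t_{k-1}}^{t_k}C(s)^2\,\d s$ times $1+\|g\|_{[0,T]}^2$. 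Summing these yields the stated bound on $\sup_t\E[(\widehat{X}_t - \xi_t)^2]$.

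For the global comparison I would subtract \eqref{eq:xhat_eta} from the equation for $X$ and split the difference into the residual piece $\int_{t_{\eta(t)}}^t K(t-s)[\cdots]\,\d s$, the kernel-shift piece $\int_0^{t_{\eta(t)}}(K(t-s) - K(t-t_{\eta(s)}))[\cdots]\,\d s$, and the Lipschitz piece $\int_0^{t_{\eta(t)}} K(t-t_{\eta(s)})[b(s,X_s) - b(s,\xi_s)]\,\d s$ plus their stochastic analogues. The first two produce the same $\omega_{K,T}(T/N)$ and $\sup_k\int C^2$ contributions as in Step 2, while for the Lipschitz piece I would write $X_s - \xi_s = (X_s - \widehat{X}_s) + (\widehat{X}_s - \xi_s)$ and invoke Step 2. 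This yields an inequality
\[
\E[|X_t - \widehat{X}_t|^2] \leq A(N) + c\int_0^t K(t-s)^2 C(s)^2\E[|X_s - \widehat{X}_s|^2]\,\d s
\]
with $A(N)$ matching the claimed right-hand side, and a final application of Volterra Gronwall closes the argument. The main obstacle is keeping every constant independent of $N$ despite $C \in L^2_\loc$: this forces the sharper $\sup_k\int_{t_{k-1}}^{t_k}C(s)^2\,\d s$ to replace the naive $T\|C\|_\infty^2/N$, and it uses crucially that $K$ is continuous on the \emph{closed} half-line $\R_+$, so that $K(0) < \infty$ and $\omega_{K,T}$ is a genuine modulus of continuity on $[0,T]$.
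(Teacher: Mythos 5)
Your proposal follows essentially the same architecture as the paper's proof: an a priori $L^2$ bound obtained by combining the SDE Gronwall estimate on each interval $[t_{k-1},t_k)$ with the compact representation \eqref{eq:xhat_eta}, followed by a local comparison of $\widehat{X}$ with $\xi$ via the split $\widehat{X}_t-\widehat{X}_{t_k-}$ plus $\widehat{X}_{t_k-}-\xi_t$, and a global comparison of $\widehat{X}$ with $X$ via the three-piece decomposition and a Volterra Gronwall closing step. One imprecision worth flagging in Step~1: after substituting $\E[|\xi_s|^2]\lesssim 1+\E[|\widehat{X}_{t_{\eta(s)+1}-}|^2]$ into your inequality, the unknown on the right involves $\widehat{X}$ at the future grid point $t_{\eta(s)+1}-$, not at $s$, so this is not directly a Volterra inequality in the sense of \cite[Lemma A.1]{BBF23}; the paper instead evaluates at $t=t_k-$ to obtain the recursion $\E[|\widehat{X}_{t_k-}|^2]\lesssim 1+\|g\|_{[0,T]}^2+\sum_{j<k}\E[|\widehat{X}_{t_j-}|^2]\int_{t_{j-1}}^{t_j}C(s)^2\,\d s$ and closes it with the \emph{discrete} Gronwall inequality \cite{discrete_gronwall}. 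With this adjustment your argument matches the paper's.
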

    \begin{proof}
        Let $k\in\{1,\dots,N\}$ and $t\in[t_{k-1},t_{k})$. By using \eqref{eq:aux_sde},
        \[
        |\xi_t|^2 \lesssim |\widehat{X}_{t_k-}|^2 + \left|\int_{t_{k-1}}^t b(s,\xi_s)\,\d s\right|^2 + \left|\int_{t_{k-1}}^t \sigma(s,\xi_s)\,\d B_s\right|^2.
        \]
        An application of Jensen's inequality, \Ito's isometry and the growth condition on $b,\sigma$ yields
        \begin{align}\label{eq: L2 approximation moment bound}
        \E[|\xi_t|^2] \lesssim \E[|\widehat{X}_{t_k-}|^2] + \int_{t_{k-1}}^t C(s)^2(1 + \E[|\xi_s|^2])\,\d s.
        \end{align}
        Thus by Gronwall's inequality, $\E[|\xi_t|^2] \lesssim \E[|\widehat{X}_{t_k-}|^2] + \int_{t_{k-1}}^{t_k} C(s)^2\,\d s$ for $t \in [t_{k-1},t_k)$. By letting $t\nearrow t_k$ in \eqref{eq:xhat_eta} and using $\lim_{t\nearrow t_k}\eta(t)=k-1$, we obtain
        \begin{align*}
            \E[|\widehat{X}_{t_k-}|^2] &\lesssim g(t_k)^2 + \int_0^{t_{k-1}} K(t-t_{\eta(s)})^2 C(s)^2(1+\E[|\xi_s|^2])\,\d s \\
            &\lesssim \norm{g}_{[0,T]}^2+\int_0^{t_{k-1}}C(s)^2\,\d s+\sum_{j=1}^{k-1}\int_{t_{j-1}}^{t_j}\left(\E[|\widehat{X}_{t_j-}|^2] + \int_{t_{j-1}}^{t_j}C(s)^2\,\d s\right)C(s)^2\,\d s\\
            &\lesssim 1+\norm{g}_{[0,T]}^2 + \sum_{j=1}^{k-1} \E[|\widehat{X}_{t_j-}|^2]\int_{t_{j-1}}^{t_j}C(s)^2\,\d s
        \end{align*}
        where we have used $\int_{t_{j-1}}^{t_j}C(s)^2\, \d s \leq \|C\|_{L^2([0,T]}^2$.
        By virtue of the discrete Gronwall inequality \cite{discrete_gronwall} and the standard inequality $1+x\leq \e{x}$, we obtain $\sup_{1\leq k\leq N}\E[|\widehat{X}_{t_k-}|^2] \lesssim 1+\norm{g}_{[0,T]}^2<\infty.$ Consequently, $\sup_{t\in[0,T]}\E[|\xi_t|^2]\lesssim1+\norm{g}_{[0,T]}^2$ and so by construction of the approximation we arrive at \eqref{eq:prob_approx_L2}.

        Let $t\in[t_{k-1},t_k)$. We have $|\widehat{X}_t-\xi_t|^2\lesssim |\widehat{X}_t-\widehat{X}_{t_k-}|^2+|\widehat{X}_{t_k-}-\xi_t|^2$. For the second term, using \eqref{eq: L2 approximation moment bound} we arrive at
        \begin{align*}
            \E[|\widehat{X}_{t_k-}-\xi_t|^2] \lesssim \int_{t_{k-1}}^t C(s)^2(1+\mathbb{E}[|\xi_s|^2])\,\d s \lesssim \left( 1+ \|g\|_{[0,T]}^2\right)\sup_{1\leq k\leq N}\int_{t_{k-1}}^{t_k}C(s)^2\,\d s.
        \end{align*}
       For the first difference, we have
        \[
        \widehat{X}_{t}-\widehat{X}_{t_{k-}} = g(t)-g(t_k) + \int_0^{t_{k-1}} (K(t-t_{\eta(s)})-K(t_k-t_{\eta(s)}))[b(s,\xi_s)\,\d s + \sigma(s,\xi_s)\,\d B_s]
        \]
        and consequently
        \begin{align*}
            &\E[|\widehat{X}_t-\widehat{X}_{t_k-}|^2]
            \\ &\quad\lesssim |g(t)-g(t_k)|^2 + \int_0^{t_{k-1}}(K(t-t_{\eta(s)})-K(t_k-t_{\eta(s)}))^2C(s)^2(1+\E[|\xi_t|^2])\,\d s
            \\ &\quad\lesssim \omega^2_{g,T}\left(\frac{T}{N}\right) + \left( 1+ \|g\|_{[0,T]}^2\right)\omega^2_{K,T}\left(\frac{T}{N}\right).
        \end{align*}
        Finally, it remains to bound the difference
        \begin{align*}
            \widehat{X}_t -X_t &=  \int_0^t K(t-s)[b(s,\xi_s)-b(s,X_s)\,\d s + \sigma(s,\xi_s)-\sigma(s,X_s)\,\d B_s]\\
            &\quad+\int_0^{t_{\eta(t)}} (K(t-t_{\eta(s)})-K(t-s))[b(s,\xi_s)\,\d s + \sigma(s,\xi_s)\,\d B_s]\\
            &\quad-\int_{t_{\eta(t)}}^t K(t-s)[b(s,\xi_s)\,\d s + \sigma(s,\xi_s)\,\d B_s].
        \end{align*}
        Using the linear growth and Lipschitz condition \eqref{eq: lipschitz L2}, we obtain
        \begin{align*}
            \E[|\widehat{X}_t-X_t|^2] &\lesssim \int_0^t K(t-s)^2 C(s)^2\E[|\xi_s-X_s|^2]\,\d s\\
            &\quad+ \int_0^{t_{\eta(t)}} (K(t-t_{\eta(s)})-K(t-s))^2 C(s)^2(1+\E[|\xi_s|^2])\,\d s\\
            &\quad+ \int_{t_{\eta(t)}}^t K(t-s)^2C(s)^2(1+\E[|\xi_s|^2])\,\d s.
        \end{align*}
        For the first term we use $|\xi_s - X_s|^2\lesssim |\xi_s-\widehat{X}_s|^2 + |\widehat{X}_s-X_s|^2$ to bound
        \begin{align*}
           &\  \int_0^t K(t-s)^2 C(s)^2\E[|\xi_s-X_s|^2]\,\d s
            \\ &\lesssim \int_0^t K(t-s)^2 C(s)^2\E[|\xi_s-\widehat{X}_s|^2]\,\d s
             + \int_0^t K(t-s)^2 C(s)^2\E[|\widehat{X}_s-X_s|^2]\,\d s
            \\ &\lesssim \omega^2_{g,T}\left(\frac{T}{N}\right) + \left( 1+ \|g\|_{[0,T]}^2\right)\left(\omega^2_{K,T}\left(\frac{T}{N}\right) + \sup_{1\leq k\leq N}\int_{t_{k-1}}^{t_k}C(s)^2\,\d s\right) + \int_0^t C(s)^2 \E[|\widehat{X}_s-X_s|^2]\,\d s.
        \end{align*}
        For the second term, we use \eqref{eq: L2 approximation moment bound} to find that
        \begin{align*}
            \int_0^{t_{\eta(t)}} (K(t-t_{\eta(s)})-K(t-s))^2 C(s)^2(1+\E[|\xi_s|^2])\,\d s
            &\lesssim \left( 1+ \|g\|_{[0,T]}^2\right)\omega^2_{K,T}\left(\frac{T}{N}\right).
        \end{align*}
        Finally, the last term can be bounded by
        \begin{align*}
            \int_{t_{\eta(t)}}^t K(t-s)^2C(s)^2(1+\E[|\xi_s|^2])\,\d s
            &\lesssim \left( 1+ \|g\|_{[0,T]}^2\right)\sup_{1\leq k\leq N}\int_{t_{k-1}}^{t_k}C(s)^2\,\d s.
        \end{align*}
        Hence, collecting all estimates, Gronwall's inequality yields the assertion.
    \end{proof}

    Next, we prove the convergence of the approximation also in the case where $g$ may be singular in $t = 0$, provided that the singularity is not too rough.

    \begin{proposition}\label{prop_approx_sing_g}
        Let $g\in L^2_\loc(\R_+)\cap C((0,\infty))$ be such that there exists $\delta>0$ with
        \[
            \int_0^T \overline{g}(s)^{2+\delta}C(s)^2\,\d s <\infty
        \]
        where $\overline{g}(s)\coloneqq \sup_{s\leq t\leq T}|g(t)|$. Then, for every $t\in(0,T]$,
        \[
        \lim_{N\longrightarrow \infty}\E[|\widehat{X}_t-X_t|^2]= 0.
        \]
    \end{proposition}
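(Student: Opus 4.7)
My approach is to approximate the (possibly singular at the origin) function $g$ by a bounded continuous truncation and reduce to Proposition \ref{prop_approx}. For $\varepsilon\in(0,t)$, define $g^{\varepsilon}(s)=g(s\vee\varepsilon)$, which belongs to $C([0,T])$ with $\|g^\varepsilon\|_{[0,T]}=\overline{g}(\varepsilon)<\infty$. Let $X^\varepsilon$ and $\widehat{X}^\varepsilon$ denote respectively the unique strong solution and the splitting approximation built from the data $(g^\varepsilon,K,b,\sigma)$. By the triangle inequality,
\[
\E[|\widehat{X}_t-X_t|^2]\lesssim \E[|\widehat{X}_t-\widehat{X}^\varepsilon_t|^2]+\E[|\widehat{X}^\varepsilon_t-X^\varepsilon_t|^2]+\E[|X^\varepsilon_t-X_t|^2],
\]
so the task splits into three pieces.

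The middle piece vanishes as $N\to\infty$ for each fixed $\varepsilon$ by Proposition \ref{prop_approx}, since $g^\varepsilon$ is continuous and bounded on $[0,T]$. Hence it remains to show that the two outer pieces vanish as $\varepsilon\downarrow 0$, uniformly in $N$. For $\E[|X_t-X^\varepsilon_t|^2]$, I would subtract the two Volterra equations and apply It\^o's isometry together with the Lipschitz condition \eqref{eq: lipschitz L2}, obtaining
\[
\E[|X_t-X^\varepsilon_t|^2]\lesssim |g(t)-g^\varepsilon(t)|^2+\int_0^t K(t-s)^2 C(s)^2\,\E[|X_s-X^\varepsilon_s|^2]\,\d s.
\]
A Volterra-type Gronwall inequality of the form \eqref{eq:volterra_gronwall} then yields a bound by $|g(t)-g^\varepsilon(t)|^2$ plus a convolution of $|g-g^\varepsilon|^2$ against a resolvent $R\in L^1_\loc(\R_+)$. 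Using $g=g^\varepsilon$ on $[\varepsilon,T]$ and $|g-g^\varepsilon|(s)\leq 2\overline{g}(s)\1_{[0,\varepsilon]}(s)$, this reduces to $\int_0^\varepsilon R(t-s)\overline{g}(s)^2\,\d s$, which tends to $0$ as $\varepsilon\downarrow 0$ by dominated convergence, provided that the weighted integrability of $\overline{g}$ supplies an integrable majorant; this is where the assumption $\int_0^T\overline{g}(s)^{2+\delta}C(s)^2\,\d s<\infty$ enters through a single H\"older step of exponent $(2+\delta)/2$.

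The first piece $\E[|\widehat{X}_t-\widehat{X}^\varepsilon_t|^2]$ is handled analogously, but at the discrete level. Subtracting the recursions \eqref{eq:xhat_eta} defining $\widehat{X}_{t_k-}$ and $\widehat{X}^\varepsilon_{t_k-}$ (together with those for $\xi^k$ and $\xi^{\varepsilon,k}$) yields a discrete Volterra recursion forced by $|g(t_k)-g^\varepsilon(t_k)|^2$, which vanishes once $t_k\geq\varepsilon$. The discrete Gronwall inequality then closes the estimate and, by design, the resulting bound has the same shape as in the continuous case. The main obstacle I expect is making this estimate uniform in $N$: the moment bounds in the proof of Proposition \ref{prop_approx} involve $\|g\|_{[0,T]}$, which is infinite here, so those estimates must be refined so that the sup-norm is replaced by the weighted quantity $\int_0^T R(t-s)\overline{g}(s)^2\,\d s$. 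The strict exponent $2+\delta$ in the assumption is precisely what allows a H\"older step to absorb the singularity of $g$ near $0$ against the $C^2$ weight without disrupting the Gronwall feedback.
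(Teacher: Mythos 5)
Your overall strategy — approximate $g$ by a bounded continuous $g^\varepsilon$, split the error by the triangle inequality into a piece that vanishes as $N\to\infty$ for fixed $\varepsilon$ via Proposition \ref{prop_approx}, and two difference-pieces controlled uniformly in $N$ as $\varepsilon\downarrow 0$ — is exactly the paper's. The only genuine deviation is the choice of approximant: you truncate, $g^\varepsilon(s)=g(s\vee\varepsilon)$, while the paper shifts, $g_\varepsilon(t)=g(t+\varepsilon)$. Your choice is arguably cleaner: $g-g^\varepsilon$ is then supported in $[0,\varepsilon]$, so the Gronwall-accumulated forcing in both difference-pieces is absorbed directly by $\int_0^\varepsilon \overline{g}(s)^2 C(s)^2\,\d s\to 0$ (using $\overline{g}^2C^2\leq C^2+\overline{g}^{2+\delta}C^2\in L^1$), whereas the paper's shift approximant forces one to establish uniform integrability in $N$ of the step-function sequence $(G^N_\varepsilon)_N$ before letting $N\to\infty$.

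The ``main obstacle'' you anticipate — that the moment bound $\E[|\widehat X_t|^2]\lesssim 1+\|g\|_{[0,T]}^2$ of Proposition \ref{prop_approx} is useless when $\|g\|_{[0,T]}=\infty$ — does not in fact arise in the two difference-pieces. Subtracting the equations for $(X,X^\varepsilon)$, respectively for $(\widehat X,\widehat X^\varepsilon)$ together with $(\xi,\xi^\varepsilon)$, with the same $(K,b,\sigma)$ and applying the Lipschitz bound \eqref{eq: lipschitz L2} gives a (discrete) Volterra inequality forced solely by $|g-g^\varepsilon|^2$, with no unconditional moment of $\widehat X$ or $\xi$ entering at all; this is precisely the paper's inductive inequality \eqref{eq:hatX-hatXeps}. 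So no refinement of Proposition \ref{prop_approx} is needed — the Lipschitz structure of the increments closes your estimate on its own — and the step you leave open resolves once this is observed.
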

    \begin{proof}
    Fix $\varepsilon>0$ and define $g_\varepsilon\colon\R_+\longrightarrow \R$ by $g_\varepsilon(t)=g(t+\varepsilon).$ Moreover, let $X^\varepsilon$ be the strong solution of
    \[
    X^\varepsilon_t = g_\varepsilon(t) + \int_0^t K(t-s)[b(s,X^\varepsilon_s)\,\d s + \sigma(s,X^\varepsilon_s)\,\d B_s].
    \]
    Denote by $\widehat{X}^\varepsilon$ its approximation and by $\xi^\varepsilon$ the process that arises in its construction. Then $|\widehat{X}_t-X_t|\leq |\widehat{X}_t-\widehat{X}^\varepsilon_t| + |\widehat{X}^\varepsilon_t-X^\varepsilon_t| + |X^\varepsilon_t-X_t|$. Proposition \ref{prop_approx} gives
    \[
    \E[|\widehat{X}^\varepsilon_t-X^\varepsilon_t|^2]
    \lesssim \omega^2_{g_\varepsilon,T}\left(\frac{T}{N}\right) + \left(1 + \|g_{\varepsilon}\|_{[0,T]}^2\right)\left( \omega^2_{K,T}\left(\frac{T}{N}\right) + \sup_{1\leq k\leq N}\int_{t_{k-1}}^{t_k} C(s)^2\,\d s\right),
    \]
    and it is a routine to show $\E[|X^\varepsilon_t-X_t|^2]\lesssim |g_\varepsilon(t)-g(t)|^2.$ Furthermore, we claim that
    \begin{equation}\label{eq:hatX-hatXeps}
        \E[|\widehat{X}_t-\widehat{X}^\varepsilon_t|^2]\lesssim |g(t)-g_\varepsilon(t)|^2+\sum_{j=1}^{\eta(t)}|g(t_j)-g_\varepsilon(t_j)|^2\int_{t_{j-1}}^{t_j}C(s)^2\,\d s, \qquad t \in (0,T].
    \end{equation}
    Indeed, the claim is certainly true on $(0,t_1)$ due to $\widehat{X}_t^{\varepsilon} = g_{\varepsilon}(t)$ and $\widehat{X}_t = g(t)$. Suppose \eqref{eq:hatX-hatXeps} were true on $(0,t_k)$ and let $t\in[t_k,t_{k+1})$ then using \eqref{eq: lipschitz L2} we arrive at
    \begin{align*}
        &\E[|\widehat{X}_t-\widehat{X}^\varepsilon_t|^2]\\&\quad\lesssim |g(t)-g_\varepsilon(t)|^2 + \sum_{j=1}^k \int_{t_{j-1}}^{t_j}C(s)^2\E[|\xi_s-\xi^\varepsilon_s|^2]\,\d s \\
        &\quad\lesssim |g(t)-g_\varepsilon(t)|^2 + \sum_{j=1}^{k-1}|g(t_j)-g_\varepsilon(t_j)|^2\int_{t_{j-1}}^{t_j} C(s)^2\,\d s + \int_{t_{k-1}}^{t_k} C(s)^2\E[|\xi_s-\xi^\varepsilon_s|^2]\,\d s.
    \end{align*}
    Finally, an application of Gronwall's inequality to estimate $\E[|\xi_s-\xi^\varepsilon_s|^2]\lesssim |g(t_k)-g_\varepsilon(t_k)|^2$ yields \eqref{eq:hatX-hatXeps}.
    Consequently, the following less sharp version of \eqref{eq:hatX-hatXeps} holds true
    \begin{align*}
    \E[|\widehat{X}_t- \widehat{X}_t^\varepsilon|^2]
    &\lesssim |g(t)-g_\varepsilon(t)|^2 + \sum_{j=1}^N|g(t_j)-g_\varepsilon(t_j)|^2\int_{t_{j-1}}^{t_j}C(s)^2\,\d s
    \\ &= |g(t)-g_\varepsilon(t)|^2 + \int_0^T G^N_\varepsilon(s)C^2(s)\,\d s,
    \end{align*}
    where $G^N_\varepsilon(s)\coloneqq\sum_{j=1}^N \1_{[t_{j-1},t_j)}(s)|g(t_j)-g_\varepsilon(t_j)|^2$. By continuity of $g$ on $(0,T]$, one can show that $G^N_\varepsilon(t)\overset{N\longrightarrow \infty}{\longrightarrow}|g(t)-g_\varepsilon(t)|^2$ for all $t\in(0,T]$ and fixed $\varepsilon > 0$. Moreover, it holds that
    \begin{align*}
        \int_0^T |G^N_\varepsilon(s)|^{1+\delta/2}C(s)^2\,\d s
        &= \sum_{j=1}^N \int_{t_{j-1}}^{t_j} |g(t_j)-g_\varepsilon(t_j)|^{2+\delta} C(s)^2\,\d s\\
        &\lesssim \sum_{j=1}^N \int_{t_{j-1}}^{t_j} |g(t_j)|^{2+\delta}C(s)^2\,\d s + \norm{g_\varepsilon}_{[0,T]}^{2+\delta}\int_0^T C(s)^2\,\d s\\
        &\leq\int_0^T \overline{g}(s)^{2+\delta}C(s)^2\,\d s + \norm{g_\varepsilon}_{[0,T]}^{2+\delta}\int_0^T C(s)^2\,\d s.
    \end{align*}
    Thus, $(G^N_\varepsilon)_{N\in\N}$ is uniformly integrable. Moreover,
    \[
    \lim_{N \longrightarrow  \infty}\sup_{1\leq k\leq N}\int_{t_{k-1}}^{t_k} C(s)^2\,\d s = 0.
    \]
    Indeed, $t\longmapsto\int_0^t C(s)^2\,\d s$  is an absolutely continuous function. Thus, for every $\widetilde{\varepsilon}>0$ there exists $\widetilde{\delta}>0$ such that for any $a,b>0$ with $b-a<\widetilde{\delta}$ holds $\int_a^b C(s)^2\,\d s<\widetilde{\varepsilon}$. Choosing $N$ large enough yields $t_k-t_{k-1}=\frac{1}{N}<\widetilde{\delta}$ and so $\int_{t_{k-1}}^{t_k}C(s)^2\,\d s<\widetilde{\varepsilon}$, $k\in\{1,\dots,N\}$. Collecting all estimates gives
    \begin{align*}
       \limsup_{N\longrightarrow \infty}\E[|\widehat{X}_t-X_t|^2]\lesssim |g(t)-g_\varepsilon(t)|^2  + \int_0^T |g(s)-g_\varepsilon(s)|^2 C(s)^2\,\d s.
    \end{align*}
    The right-hand side tends to zero as $\varepsilon\downarrow0$ due to dominated convergence, which completes the proof.
    \end{proof}

Finally, we prove an auxiliary result for kernels that preserve positivity.

    \begin{lemma}\label{prop_non-neg_preserv}
		Suppose $K\colon(0,\infty)\longrightarrow \R_+$ is a non-negativity preserving kernel with $K(0_+)>0$ and let $f\colon (0,\infty) \longrightarrow \R_+$ be non-decreasing. Let $N\in\N$, $0 < t_1<\dots < t_N$ and $x_1,\dots,x_N\in\R$ be such that, for any $k\in\{1,\dots,N\},$
		\begin{equation}\label{eq:assumption_pos_preserv}
			f(t_k) + \sum_{\ell=1}^k x_{\ell} K(t_k-t_{\ell})\geq0.
		\end{equation}
		Then, it follows
		\begin{equation*}
			f(t) + \sum_{k=1}^N \1_{\{ t_k\leq t\}} x_k K(t-t_k)\geq0,\quad t > 0.
		\end{equation*}
	\end{lemma}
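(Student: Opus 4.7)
The strategy is to absorb $f$ into the kernel sum by constructing non-negative coefficients $b_1,\dots,b_N \geq 0$ such that $f(t_k) = \sum_{\ell=1}^k b_\ell K(t_k - t_\ell)$ for every $k \in \{1,\dots,N\}$, and then to apply condition (b) of Definition \ref{def_pos_preserving} to the combined sequence $y_\ell \coloneqq x_\ell + b_\ell$. This construction requires $K(0_+) \in (0,\infty)$, which matches the scope in which the lemma is used (the completely monotone case $K(0_+) = +\infty$ is reached via the approximation in Step 3 of Theorem \ref{comparison_cm_Lipschitz}). The $b_k$ are then defined recursively by
\[
b_k \coloneqq \frac{1}{K(0_+)}\left( f(t_k) - \sum_{\ell=1}^{k-1} b_\ell K(t_k - t_\ell) \right), \qquad k = 1,\dots,N,
\]
which is the unique choice producing the identity $f(t_k) = \sum_{\ell=1}^k b_\ell K(t_k - t_\ell)$.

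The main step, and the only place where the monotonicity assumptions are genuinely used, is verifying by induction that $b_k \geq 0$. The base case $b_1 = f(t_1)/K(0_+) \geq 0$ is immediate from $f \geq 0$. For the inductive step, I use that $f$ is non-decreasing together with the induction hypothesis to obtain $f(t_k) \geq f(t_{k-1}) = \sum_{\ell=1}^{k-1} b_\ell K(t_{k-1} - t_\ell)$, and since $K$ is non-increasing and $t_k - t_\ell > t_{k-1} - t_\ell$, also $K(t_k - t_\ell) \leq K(t_{k-1} - t_\ell)$. Combining these two inequalities yields $f(t_k) \geq \sum_{\ell=1}^{k-1} b_\ell K(t_k - t_\ell)$, hence $b_k \geq 0$. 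This is the main (though modest) obstacle, as it relies on both monotonicities simultaneously.

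With the $b_k$ in hand, assumption \eqref{eq:assumption_pos_preserv} gives for every $k$
\[
\sum_{\ell=1}^k y_\ell K(t_k - t_\ell) = \sum_{\ell=1}^k x_\ell K(t_k - t_\ell) + f(t_k) \geq 0,
\]
so condition (b) of Definition \ref{def_pos_preserving}, applied to $(y_\ell)_{\ell=1}^N$, produces
\[
\sum_{k=1}^N \1_{\{t_k \leq t\}} (x_k + b_k) K(t - t_k) \geq 0 \qquad \text{for all } t \geq 0.
\]
To derive the desired conclusion, fix $t > 0$ and set $m \coloneqq \max\{k : t_k \leq t\}$, with the convention $m = 0$ if no such index exists (in which case the claim reduces to $f(t) \geq 0$). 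Using $K$ non-increasing and $f$ non-decreasing,
\[
\sum_{k=1}^m b_k K(t - t_k) \leq \sum_{k=1}^m b_k K(t_m - t_k) = f(t_m) \leq f(t),
\]
and subtracting this bound from the preceding display finishes the proof.
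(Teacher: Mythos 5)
Your proof is correct and is essentially the paper's proof under a change of sign: your $b_k$ is $-\widetilde{x}_k$ in the paper's notation, and your $y_\ell = x_\ell + b_\ell$ is the paper's $\delta_\ell$, so the decomposition and the appeal to condition (b) of Definition \ref{def_pos_preserving} coincide. You organize the argument a bit more cleanly than the paper does --- you first prove $b_k\geq 0$ by a self-contained induction, and then get the bound $\sum_{k\leq m} b_k K(t-t_k) \leq f(t_m)\leq f(t)$ by directly comparing $K(t-\cdot)$ with $K(t_m-\cdot)$, whereas the paper runs a joint induction proving the sign condition $\widetilde{x}_k\leq 0$ and the inequality \eqref{eq:remaining_inequality} simultaneously --- but the underlying idea is identical.
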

	\begin{proof}
		We define recursively $\widetilde{x}_1\coloneqq\frac{-f(t_1)}{K(0_+)}$ and, for $k\in\{1,\dots,N\},$
		\[
		\widetilde{x}_k\coloneqq \frac{-1}{K(0_+)} \left( f(t_k) + \sum_{\ell=1}^{k-1}\widetilde{x}_{\ell}K(t_k-t_{\ell})\right).
		\]
		By construction we have for any $k\in\{1,\dots,N\}$
		\begin{equation} \label{eq:construction}
			f(t_k) + \sum_{\ell=1}^k \widetilde{x}_{\ell} K(t_k-t_{\ell}) =0.
		\end{equation}
		Moreover, define $\delta_k \coloneqq x_k - \widetilde{x}_k,\, k\in\{ 1,\dots,N \}.$
		Then, we have by \eqref{eq:construction} and \eqref{eq:assumption_pos_preserv}
		\[
		\sum_{\ell=1}^k \delta_{\ell}K(t_k-t_{\ell}) = \sum_{\ell=1}^k x_{\ell} K(t_k-t_{\ell})+f(t_k)\geq0,\quad k\in\{ 1,\dots,N \},
		\]
		and since $K$ preserves non-negativity, we obtain $	\sum_{k=1}^N \1_{\{ t_k\leq t\}} \delta_k K(t-t_k)\geq0$ for $t > 0$.
		We have
		\begin{align*}
			f(t) + \sum_{k=1}^N \1_{\{ t_k\leq t\}} x_k K(t-t_k) = f(t) + \sum_{k=1}^N \1_{\{ t_k\leq t\}} \widetilde{x}_k K(t-t_k) + \sum_{k=1}^N \1_{\{ t_k\leq t\}} \delta_k K(t-t_k)
		\end{align*}
		and hence, it suffices to show that
		\begin{equation}\label{eq:remaining_inequality}
			f(t) + \sum_{k=1}^N \1_{\{ t_k\leq t\}} \widetilde{x}_k K(t-t_k)\geq0,\quad t > 0.
		\end{equation}
        By induction over $k$ we show that \eqref{eq:remaining_inequality} holds on $[t_k,t_{k+1})$ and $\widetilde{x}_1,\dots,\widetilde{x}_{k+1}\leq0$. For $t\in (0,t_1)$, \eqref{eq:remaining_inequality} is evident and by assumption $\widetilde{x}_1 = \frac{-f(t_1)}{K(0_+)}\leq0$. Suppose \eqref{eq:remaining_inequality} holds on $[t_{k-1},t_k)$ and $\widetilde{x}_1,\dots,\widetilde{x}_{k}\leq0.$ Then, we find for $t\in[t_k,t_{k+1})$
        \begin{align*}
            f(t) + \sum_{\ell=1}^k \widetilde{x}_\ell K(t-t_\ell) &\geq f(t) + \sum_{\ell=1}^{k-1} \widetilde{x}_\ell K(t-t_\ell) - \left(f(t_k)+\sum_{\ell=1}^{k-1}\widetilde{x}_\ell K(t_k-t_\ell)\right)\\
            &= f(t)-f(t_k) +\sum_{\ell=1}^{k-1}\widetilde{x}_\ell \left(K(t-t_\ell)-K(t_k-t_\ell)\right)\\
            &\geq f(t)-f(t_k)\\&\geq0,
            \end{align*}
        where we used that $K(t-t_\ell)-K(t_k-t_\ell)\leq0$ since $K\geq0$ is non-increasing. Replacing $t$ by $t_{k+1}$ in the above calculation yields $\widetilde{x}_{k+1}\leq0.$
\end{proof}

\end{appendices}

	\bibliographystyle{plainurl}
	\bibliography{literature}

\end{document}